\documentclass[12pt]{amsart}
\usepackage[margin=2.5cm]{geometry}
\usepackage{amsmath,amssymb,amsfonts,amsthm,amsrefs,mathrsfs}
\usepackage{epic}
\usepackage{amsopn,amscd,graphicx}
\usepackage{color,transparent} 
\usepackage[usenames, dvipsnames]{xcolor}
\usepackage
[colorlinks, breaklinks,
bookmarks = false,
linkcolor = NavyBlue,
urlcolor = ForestGreen,
citecolor = ForestGreen,
hyperfootnotes = false
]
{hyperref}
\usepackage{scalerel}
\usepackage{stackengine,wasysym}
\usepackage{palatino, mathpazo}
\usepackage{multirow}
\usepackage[all,cmtip]{xy}
\usepackage{stmaryrd}
\usepackage{tikz-cd}
\usepackage{yhmath}
\usepackage{enumerate}
\usepackage[dvipsnames]{xcolor}
 1
 1
 1

\newtheorem{theorem}{Theorem}[section]

\newtheorem{lemma}[theorem]{Lemma}
\newtheorem{remark}[theorem]{Remark}

\newtheorem{example}[theorem]{Example}
\theoremstyle{definition}
\newtheorem{definition}[theorem]{Definition}

\newtheoremstyle{named}{}{}{\itshape}{}{\bfseries}{.}{.5em}{\thmnote{#3}#1}
\theoremstyle{named}

\numberwithin{equation}{subsection}

\newcommand{\ol}{\overline}
\newcommand{\R}{\mathbb{R}}
\newcommand{\C}{\mathbb{C}}
\newcommand{\N}{\mathbb{N}}

\newcommand{\ddbar}{\overline\partial}
\newcommand{\pr}{\partial}
\newcommand{\Td}{\widetilde}

\newcommand{\abs}[1]{\left\vert#1\right\vert}
\newcommand{\set}[1]{\left\{#1\right\}}
\newcommand{\To}{\rightarrow}
\def\cC{\mathscr{C}}
\newcommand\reallywidetilde[1]{\ThisStyle{%
  \setbox0=\hbox{$\SavedStyle#1$}%
  \stackengine{-.1\LMpt}{$\SavedStyle#1$}{%
    \stretchto{\scaleto{\SavedStyle\mkern.2mu\AC}{.5150\wd0}}{.6\ht0}%
  }{O}{c}{F}{T}{S}%
}}
\title[On the Second Coefficient in the Semi-Classical Expansion of Toeplitz Operators]{On the Second Coefficient in the Semi-Classical Expansion of Toeplitz Operators}

\begin{document}

\author[Chin-Chia Chang]{Chin-Chia Chang}
\address{Universit{\"a}t zu K{\"o}ln,  Mathematisches Institut,
	Weyertal 86-90, 50931 K{\"o}ln, Germany}
\thanks{Chin-Chia Chang was partially supported by the DFG project  
	SFB/TRR 191 (Project-ID 281071066-TRR~191)}
\email{cchang@math.uni-koeln.de}

\author[Hendrik Herrmann]{Hendrik Herrmann}
\address{University of Vienna, Faculty of Mathematics, Oskar-Morgenstern-Platz 1, 1090 Vienna, Austria}
\thanks{Hendrik Herrmann was partially supported the by the Austrian Science Fund (FWF) grant: The dbar Neumann operator and related topics (DOI 10.55776/P36884)}
\email{hendrik.herrmann@univie.ac.at or post@hendrik-herrmann.de}

\author[Chin-Yu Hsiao]{Chin-Yu Hsiao}
\address{Department of Mathematics, National Taiwan University, Astronomy-Mathematics Building, 
No. 1, Sec. 4, Roosevelt Road, Taipei 10617, Taiwan}
\thanks{Chin-Yu Hsiao was partially supported by the NSTC Project 113-2115-M-002 -011 -MY3\\
This preprint has not undergone peer review or any post-submission improvements or corrections. The Version of Record of this article is published in \textit{Analysis and Mathematical Physics}, and is available online at https://doi.org/10.1007/s13324-025-01105-2}
\email{chinyuhsiao@ntu.edu.tw or chinyu.hsiao@gmail.com} 

\date{\today}

\begin{abstract}  Let $X$ be a compact strictly pseudoconvex embeddable 
CR manifold and let
$A$ be the Toeplitz operator on $X$ associated with a Reeb vector field $\mathcal{T}\in\cC^\infty(X,TX)$. Consider the operator $\chi_k(A)$ 
defined by the functional
calculus of $A$, where $\chi$ is a smooth function
with compact support in the positive real line and 
$\chi_k(\lambda):=\chi(k^{-1}\lambda)$. It was established recently that $\chi_k(A)(x,y)$ admits a full asymptotic expansion in $k$ when \(k\) becomes large. The second coefficient of the expansion plays an important role in the further studies of CR geometry. In this work, we calculate the second coefficient of the expansion.
\end{abstract}
\maketitle 
\tableofcontents
\bigskip
\textbf{Keywords:} CR manifolds, Szeg\H{o} kernels, Toeplitz operators

\textbf{Mathematics Subject Classification:} 32Vxx, 32A25, 53D50

\tableofcontents
\section{Introduction and Main Results} \label{s-gue241206yyd}
{\tiny{{\color{white}{\subsection{ }}}}}
Let $X$ be a compact strictly pseudoconvex embeddable 
CR manifold and let
$A$ be the Toeplitz operator on $X$ associated with a Reeb vector field $\mathcal{T}\in\cC^\infty(X,TX)$. Consider the operator $\chi_k(A)$ 
defined by the functional
calculus of $A$, where $\chi$ is a smooth function
with compact support in the positive real line and 
$\chi_k(\lambda):=\chi(k^{-1}\lambda)$. Let $\chi_k(A)(x,y)$ denote the distribution kernel of $\chi_k(A)$. It was shown in~\cite{HHMS23}*{Theorem 1.1} that $\chi_k(A)(x,y)$ admits a full asymptotic expansion in $k$. In particular, 
\begin{equation}\label{e-gue241114yyd}
\chi_k(A)(x,x)\sim k^{n+1}b_0(x)+k^nb_1(x)+\cdots,
\end{equation}
where $b_j(x)\in\mathcal{C}^\infty(X)$, $j=0,1,\ldots$. The expansion \eqref{e-gue241114yyd} can be seen as a CR analogue of the Tian-Yau-Catlin-Zelditch Bergman kernel asymptotics in complex geometry (we refer the reader to the book~\cite{Ma_Marinescu_HMI_BKE_2007}). The term $b_0$ in \eqref{e-gue241114yyd} was already calculated in~\cite{HHMS23}. For the Sasakian case the expansion~\eqref{e-gue241114yyd} was obtained in~\cite{Herrmann_Hsiao_Li_T_equivariant_Szego_2020}. It is a very natural (and fundamental question) to calculate $b_1$ in \eqref{e-gue241114yyd}. The calculation of $b_1$ can be seen as some kind of local index theorem in CR geometry. On the other hand, the second coefficient of Bergman kernel asymptotic expansion in complex geometry plays an important role in K\"ahler geometry (see \cite{Ma_Marinescu_HMI_BKE_2007}). 
The main goal of this work is to calculate $b_1$. 
 
We now formulate our main result. We refer the reader to Section~\ref{s-gue241119yyd} for some terminology and notations used here. 
Let $(X,T^{1,0}X)$ be a compact orientable strictly pseudoconvex embeddable CR manifold of dimension $2n+1$, \(n\geq 1\). Fix a global one form $\xi\in\cC^\infty(X,T^*X)$ such that for any \(x\in X\) we have $\xi(x)\neq0$,  $\ker\xi(x)=\operatorname{Re}T_x^{1,0}X$ and the Levi form $\mathcal{L}_x$ is positive definite on \(T^{1,0}_xX\). Here the Levi form $\mathcal{L}_x=\mathcal{L}^\xi_x$ at \(x\in X\) (with respect to \(\xi\)) is defined by 
\[\mathcal{L}_x\colon T^{1,0}_xX\times T^{1,0}_xX\to\C,\,\,\,\mathcal{L}_x(U,V)=\frac{1}{2i}d\xi(U,\overline{V}).\]
 There exists a unique vector field $\mathcal{T}\in\cC^\infty(X,TX)$ such that $\mathcal{T}\lrcorner\xi\equiv1$ and $\mathcal{T}\lrcorner d\xi\equiv 0$, where \(\lrcorner\) denotes the interior product (contraction) between vectors and forms. We call $\mathcal{T}$  the Reeb vector field on $X$ with respect to $\xi$. Let $\langle\,\cdot\,|\,\cdot\,\rangle$ denote the Hermitian metric on $\mathbb CTX$ 
 induced by the Levi form $\mathcal{L}_x$ (see \eqref{e-gue241115yyd}). The Hermitian $\langle\,\cdot\,|\,\cdot\,\rangle$ on $\mathbb CTX$ induces a Hermitian metric 
$\langle\,\cdot\,|\,\cdot\,\rangle$ on $\oplus_{p,q\in\mathbb N_0,0\leq p,q\leq n}T^{*p,q}X$, where $T^{*p,q}X$ denotes the bundle of $(p,q)$ forms of $X$. Let $\Omega^{p,q}(X)$
denote the space of smooth sections with values in $T^{*p,q}X$. We write $\cC^\infty(X):=\Omega^{0,0}(X)$.

 Fix a volume form $dV$ on $X$ such that $\mathcal{L}_{\mathcal{T}}dV\equiv 0$, where $\mathcal{L}_{\mathcal{T}}$ denotes the Lie derivative of $\mathcal{T}$. 
 For $p, q\in\mathbb N_0$, $0\leq p,q\leq n$, let $(\,\cdot\,|\,\cdot\,)$ be the $L^2$ inner product on $\Omega^{p,q}(X)$ induced by $dV$ and $\langle\,\cdot\,|\,\cdot\,\rangle$ and let $L^2_{(p,q)}(X)$ be the completion of $\Omega^{p,q}(X)$ with respect to $(\,\cdot\,|\,\cdot\,)$. We write $L^2(X):=L^2_{(0,0)}(X)$. Let $\ddbar_b: \cC^\infty(X)\To\Omega^{0,1}(X)$ 
 be the tangential Cauchy-Riemann operator. We extend $\ddbar_b$ to the $L^2$ space:
 \[\ddbar_b: {\rm Dom\,}\ddbar_b\subset L^2(X)\To L^2_{(0,1)}(X),\]
 where ${\rm Dom\,}\ddbar_b:=\set{u\in L^2(X);\, \ddbar_bu\in L^2_{(0,1)}(X)}$. Let $H^0_b(X):={\rm Ker\,}\ddbar_b$. Let \[\Pi: L^2(X)\to H^0_b(X)\]
 be the Szeg\H{o} projection, i.e., the orthogonal projection onto $H^0_b(X)$ with respect to $(\,\cdot\,|\,\cdot\,)$. Let $\Pi(x,y)\in\mathcal{D}'(X\times X)$ denote the distribution kernel of $\Pi$. We recall
Boutet de Monvel--Sj\"ostrand's fundamental theorem~\cite{Boutet_Sjoestrand_Szego_Bergman_kernel_1975} (see also~\cite{Hsiao_Szego_Bergman_kernel_q_forms_2010},~\cite{Hsiao_Marinescu_Szego_lower_energy_2017}) about the structure of the 
Szeg\H{o} kernel.
For any coordinate patch $(D,x)$ on $X$ and any positive function \(\lambda\in\mathscr{C}^\infty(D,\R_+)\)
there is a smooth function $\varphi:D\times D\to\C$ with
\begin{equation}
\label{Eq:PhaseFuncMainThm}
\begin{split}
&\operatorname{Im}\varphi(x,y)\geq 0,\\
&\varphi(x,y)=0~\text{if and only if}~y=x,\\
&d_x\varphi(x,x)=-d_y\varphi(x,x)=\lambda(x)\xi(x),\ \,
\end{split}
\end{equation} 
such that we have on $D\times D$, the Szeg\H{o} projector
can be approximated by a Fourier integral operator
\begin{equation}\label{eq:PiFIO}
\Pi(x,y)=\int_0^{+\infty} 
e^{it\varphi(x,y)}s(x,y,t)dt+F(x,y),
\end{equation}
where $F(x,y)\in\cC^\infty(D\times D)$ and $s(x,y,t)
\in S^{n}_{\operatorname{cl}}(D\times D\times{\R}_+)$ 
is a classical H\"ormander symbol satisfying $s(x,y,t)
\sim\sum_{j=0}^{+\infty}s_j(x,y)t^{n-j}$ in 
$S^{n}_{1,0}(D\times D\times{\R}_+)$, $s_j(x,y)\in\cC^\infty(D\times D)$, $j=0,1,\ldots$, and if $\lambda(x)=1$,
\begin{equation}
\label{eq:leading term s_0 intro}
s_0(x,x)=\frac{1}{2\pi^{n+1}}\frac{dV_\xi}{dV}(x),
\end{equation} 
 where 
 \begin{equation}\label{e-gue241115ycd}
 dV_{\xi}:=\frac{2^{-n}}{n!}\xi\wedge \left(d\xi\right)^n.
 \end{equation} 
 
Let 
 \[A:=\Pi(-i\mathcal{T})\Pi: \cC^\infty(X)\To\cC^\infty(X)\]
 be the Toeplitz operator. We extend $A$ to the $L^2$ space:
 \begin{eqnarray}\label{eq:DefExtensionToeplitzOperator}
 	A:=\Pi(-i\mathcal{T})\Pi: {\rm Dom\,}A\subset L^2(X)\To L^2(X),
 \end{eqnarray}
  where ${\rm Dom\,}A:=\set{u\in L^2(X);\, Au\in L^2(X)}$. It was shown in~\cite{HHMS23}*{Theorem 3.3} that $A$ is a self-adjoint operator. We  consider the operator $\chi_k(A)$ 
defined by functional
calculus of $A$, where $\chi$ is a smooth function
with compact support in the positive real line and 
$\chi_k(\lambda):=\chi(k^{-1}\lambda)$.  Let $\chi_k(A)(x,y)$ denote the distribution kernel of $\chi_k(A)$. We recall the following consequence from the result obtained in~\cite{HHMS23}. 
 
 \begin{theorem}\label{thm:ExpansionMain}
Let $(X,T^{1,0}X)$ be a compact orientable strictly pseudoconvex embeddable CR manifold of dimension $2n+1$, \(n\geq 1\). Fix a global one form $\xi\in\cC^\infty(X,T^*X)$ such that for any \(x\in X\) we have $\xi(x)\neq0$,  $\ker\xi(x)=\operatorname{Re}T_x^{1,0}X$ and the respective Levi form $\mathcal{L}_x$ is positive definite. Denote by \(\mathcal{T}\) the Reeb vector field associated to \(\xi\), choose a volume form \(dV\) on \(X\) with $\mathcal{L}_{\mathcal{T}}dV\equiv 0$  and consider the operator \(A\) given by  \eqref{eq:DefExtensionToeplitzOperator}.  
Let $(D,x)$ be any coordinate patch and let 
$\varphi:D\times D\to\C$ be any phase function satisfying 
\eqref{Eq:PhaseFuncMainThm} and \eqref{eq:PiFIO}.
Then,  for any \(\chi\in \mathscr{C}^\infty_c(\R_+)\) putting \(\chi_k(t):=\chi(k^{-1}t)\), \(k,t\in\R_+\), one has for the distributional kernel $\chi_k(A)(x,y)$ of $\chi_k(A)$ that
\begin{equation}
\label{eq:asymptotic expansion of chi_k(T_P)}
\chi_k(A)(x,y)=\int_0^{+\infty} 
e^{ikt\varphi(x,y)}b^\chi(x,y,t,k)dt+O\left(k^{-\infty}\right)~\text{on}~D\times D,
\end{equation}
where $b^\chi(x,y,t,k)\in S^{n+1}_{\mathrm{loc}}
(1;D\times D\times{\R}_+)$,
\begin{equation}
\label{Eq:LeadingTermMainThm}
\begin{split}
&b^\chi(x,y,t,k)\sim\sum_{j=0}^{+\infty}b^\chi_{j}(x,y,t)k^{n+1-j}~
\text{in $S^{n+1}_{\mathrm{loc}}(1;D\times D\times{\R}_+)$},\\
&b^\chi_j(x,y,t)\in\mathscr{C}^\infty(D\times D\times{\R}_+),~j=0,1,2,\ldots,\\
&b^\chi_{0}(x,x,t)=\frac{1}{2\pi ^{n+1}}
\frac{dV_{\xi}}{dV}(x)\,\chi(t)\,t^n,\ \ \mbox{for every $x\in D$ with \(\lambda(x)=1\) (see \eqref{Eq:PhaseFuncMainThm})},
\end{split}
\end{equation}
with \(dV_\xi\) given by \eqref{e-gue241115ycd} and for $I^\chi:=\operatorname{supp}\chi$,
\begin{equation}\label{e-gue241115ycda}
\begin{split}
{\rm supp\,}_t b^\chi(x,y,t,k),~{\rm supp\,}_t b^\chi_j(x,y,t)\subset I^\chi,\ \ j=0,1,2,\ldots~.
\end{split}
\end{equation}
(see Definition~\ref{def:suppt} for the meaning of \(\operatorname{supp}_t\).)

In particular, we have 
\begin{equation}\label{e-gue241115ycdI}
\chi_k(A)(x,x)\sim\sum^{+\infty}_{j=0}b^\chi_j(x)k^{n+1-j}\ \ \mbox{in $S^{n+1}_{{\rm loc\,}}(1;X)$}, 
\end{equation}
where $b^\chi_j(x)=\int b^\chi_j(x,x,t)dt$, $b^\chi_j(x,y,t)$ is as in \eqref{Eq:LeadingTermMainThm}, $j=0,1,\ldots$. 
\end{theorem}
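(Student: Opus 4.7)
The theorem extracts the diagonal content of \cite[Theorem 1.1]{HHMS23}; I sketch the architecture that produces \eqref{eq:asymptotic expansion of chi_k(T_P)}.

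First, I would observe that $A=\Pi(-i\mathcal{T})\Pi$ is a self-adjoint Toeplitz operator in the Boutet de Monvel--Guillemin sense: its Schwartz kernel has wave-front set concentrated on the Szeg\H{o} conic direction $\Sigma=\{(x,\lambda\xi(x)):\lambda>0\}\subset T^*X\setminus 0$, and consequently $\chi_k(A)$ is smoothing off the diagonal. It therefore suffices to fix a coordinate patch $(D,x)$ and exploit the Boutet de Monvel--Sj\"ostrand representation \eqref{eq:PiFIO}.

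Next, I would invoke Fourier inversion,
\[\chi_k(A)=\frac{1}{2\pi}\int_{\R}\hat\chi(s)\,e^{isA/k}\,ds,\]
and identify $e^{isA/k}$ on the range of $\Pi$ as the FIO whose canonical relation is the graph of the Reeb flow $\Phi_{s/k}$ of $\mathcal{T}$ at time $s/k$; up to smoothing this reads $\chi_k(A)=\Pi\circ Q_k\circ\Pi$ where $Q_k=\frac{1}{2\pi}\int\hat\chi(s)\,\Phi_{s/k}^{*}\,ds$. Substituting \eqref{eq:PiFIO} for the two outer copies of $\Pi$ produces a triple oscillatory integral in parameters $(t_1,t_2,s)$, which after the change of variables $t_1=k\tau_1,\,t_2=k\tau_2$ carries a large parameter $k$. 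Applying the Melin--Sj\"ostrand complex stationary phase method in the auxiliary integration variables contracts this to the single-phase integral \eqref{eq:asymptotic expansion of chi_k(T_P)} with phase $kt\varphi(x,y)$ and classical symbol $b^\chi\in S^{n+1}_{\mathrm{loc}}(1;D\times D\times\R_+)$. The support condition \eqref{e-gue241115ycda} is forced by the critical-point analysis: stationarity in $s$ pins $t$ to the spectral parameter of $A$ via the functional calculus, hence to $\operatorname{supp}\chi$.

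The leading coefficient $b^\chi_0$ is then read off from the principal symbols entering the reduction. Since $\xi(\mathcal{T})=1$, the principal symbol of $A$ on $\Sigma$ at $(x,t\xi(x))$ equals $t$, so the functional calculus inserts the factor $\chi(t)$; multiplication by the principal symbol of $\Pi$ given by \eqref{eq:leading term s_0 intro} supplies the factor $\frac{1}{2\pi^{n+1}}\frac{dV_\xi}{dV}(x)\,t^n$ when $\lambda(x)=1$, yielding the stated formula. The diagonal expansion \eqref{e-gue241115ycdI} then follows by restricting to $y=x$ (noting $\varphi(x,x)=0$) and integrating in $t$. The main technical obstacle is the careful implementation of complex stationary phase with a phase function having only non-negative imaginary part, and the rigorous control of remainders in the relevant symbol class to secure a full $O(k^{-\infty})$ expansion; these steps are executed in \cite{HHMS23}.
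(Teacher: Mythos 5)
The paper does not actually prove Theorem~\ref{thm:ExpansionMain}: it is quoted verbatim as a consequence of \cite{HHMS23} (whose Lemmas 4.2--4.3 and Theorem 4.11 are what the present paper later invokes, in the proof of Theorem~\ref{t-gue241123yyd}). Your outline --- Fourier inversion of the functional calculus, approximation of the rescaled propagator by a Fourier integral operator with complex phase, contraction of the resulting multiple oscillatory integral by Melin--Sj\"ostrand stationary phase, and reading off the support condition and $b_0^\chi$ from the principal symbols --- is indeed the architecture of that cited proof, so at the level of strategy you are aligned with the source the paper relies on.

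There is, however, one genuine gap: the claim that, up to smoothing, $\chi_k(A)=\Pi\circ Q_k\circ\Pi$ with $Q_k=\frac{1}{2\pi}\int\hat\chi(s)\,\Phi_{s/k}^{*}\,ds$. The identity $e^{isA/k}\Pi=\Phi_{s/k}^{*}\Pi$ (even modulo smoothing or $O(k^{-\infty})$) requires $\mathcal{T}$ to commute with $\Pi$, i.e.\ that $\mathcal{T}$ be a CR vector field; the theorem is stated for the Reeb field of an arbitrary positive contact form $\xi$, and the paper explicitly singles out the CR-vector-field situation as a special case (second half of Theorem~\ref{t-gue241115ycd}). In general $A^2=\Pi(-i\mathcal{T})\Pi(-i\mathcal{T})\Pi\neq\Pi(-i\mathcal{T})^2\Pi$, so the propagator is not the flow pullback: only its canonical relation on the cone $\Sigma$ is the lifted Reeb flow, while its amplitude must be produced by solving the eikonal and transport equations for $\partial_s-ik^{-1}A$ within the Toeplitz/complex-phase FIO calculus. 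Since the theorem asserts a full expansion in $k$ --- and the whole point of the paper is the subleading coefficient --- substituting $\Phi_{s/k}^{*}$ for the true propagator would corrupt every coefficient beyond $b_0^\chi$. The remainder of your reduction goes through once the correct FIO ansatz for $e^{isA/k}\Pi$ is used in place of the Reeb-flow pullback.
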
 

The expansion \eqref{eq:asymptotic expansion of chi_k(T_P)} can be seen as a semi-classical version of Boutet de Monvel-Sj\"ostrand's result on strictly pseudoconvex CR manifolds (see \eqref{eq:PiFIO}). As mentioned above, it is a very natural (and fundamental question) to calculate $b^\chi_1(x)$ in \eqref{e-gue241115ycdI}. In this work, we successfully calculate $b^\chi_1(x)$. The following is our main result. 

\begin{theorem}\label{t-gue241115yyd}
With the notations and assumptions in Theorem~\ref{thm:ExpansionMain}, for $b^\chi_1(x)$ in \eqref{e-gue241115ycdI}, we have 
\begin{equation}\label{e-gue241115ycdb}
b^\chi_1(x)=\frac{1}{2\pi^{n+1}}e^{-2(n+1)f(x)}\left(\frac{1}{2}R_{scal}(x)+(n+1)\Delta_b f(x)\right)\int\chi(t)t^{n-1}dt,
\end{equation}
where $f(x):=\frac{1}{2(n+1)}\log \frac{dV(x)}{dV_\xi(x)}\in\cC^\infty(X)$ with \(dV_\xi\) given by \eqref{e-gue241115ycd}, $R_{scal}$ is the Tanaka-Webster scalar curvature with respect to the contact form $\xi$ (see \eqref{e-gue241122yydI}) and $\Delta_b:\cC^{\infty}(X)\rightarrow\cC^{\infty}(X)$ denotes the CR sublaplacian operator (see \eqref{e-gue241122yyd}). 
\end{theorem}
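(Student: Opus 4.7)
The plan is to compute $b^\chi_1(x)$ by pushing the Fourier-integral-operator analysis that produced Theorem~\ref{thm:ExpansionMain} one order further, and then recognizing the resulting differential-geometric data as Tanaka--Webster invariants. Since $b^\chi_1(x)=\int b^\chi_1(x,x,t)\,dt$, it suffices to compute $b^\chi_1(x,x,t)$ from the amplitude expansion \eqref{Eq:LeadingTermMainThm}. The strategy is therefore to redo the symbol calculus underlying \eqref{eq:asymptotic expansion of chi_k(T_P)}, carrying along all terms contributing at order $k^n$ on the diagonal, and then to invariantly interpret the result.

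First I would set up local data. Fix $x_0\in X$ and work in Baouendi--Rothschild--Treves (BRT) coordinates centered at $x_0$ in which $\mathcal{T}=\partial/\partial \theta$ and the contact form $\xi$ is adapted to the Levi form being the identity at $x_0$. I normalize the Szeg\H{o} phase by choosing $\lambda\equiv1$ in \eqref{Eq:PhaseFuncMainThm}, so \eqref{eq:leading term s_0 intro} applies. I then record the Taylor jet of $\varphi(x,y)$ up to order four at $(x_0,x_0)$; the coefficients encode pseudohermitian curvature and torsion at $x_0$. Combined with the known values $s_0(x,x)$ and $s_1(x,x)$ of the Szeg\H{o} symbol (the latter being a classical computation of Hirachi/Boutet de Monvel--Sj\"ostrand producing $R_{scal}$ and derivatives of $dV_\xi/dV$), this provides all the inputs for the expansion.

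Next I would extract $b^\chi_1(x,x,t)$. Starting from the integral representation of $\chi_k(A)$ developed in \cite{HHMS23}, realized as an oscillatory integral with phase coming from the Szeg\H{o} kernel and amplitude built by composing with $-i\mathcal{T}$ and $\chi$, I apply the stationary phase method in the appropriate fibre variables of $D\times D\times\mathbb{R}_+$. The leading order reproduces $b^\chi_0(x,x,t)=\tfrac{1}{2\pi^{n+1}}\tfrac{dV_\xi}{dV}(x)\chi(t)t^n$. The next-order contribution on the diagonal decomposes into three pieces: (i)~the second-order Hessian correction from the stationary phase formula applied to $\varphi$, which produces $t^{n-1}$ together with derivatives of the amplitude and of the phase; (ii)~the subleading Szeg\H{o} symbol $s_1(x,x)$; (iii)~terms produced by differentiating $\chi$ along the Reeb flow, which after integration by parts in $t$ also contribute to $\int\chi(t)t^{n-1}dt$.

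The main obstacle is the final identification. The expression produced in the previous step is a polynomial in the two-jet at $x_0$ of $(\xi,dV,T^{1,0}X)$, weighted by $\int\chi(t)t^{n-1}\,dt$. To match it with $\tfrac{1}{2}R_{scal}+(n+1)\Delta_b f$ I would rewrite everything using the Tanaka--Webster connection and the definitions \eqref{e-gue241122yydI}, \eqref{e-gue241122yyd}. Writing $dV=e^{2(n+1)f}dV_\xi$ converts Jacobian derivatives into $\Delta_b f$-type terms, while the intrinsic quadratic part of $\varphi$ produces a Ricci-trace that must be recognized as $R_{scal}$. The subtlety is that in BRT coordinates many non-invariant torsion-type terms appear in intermediate stages, and pseudohermitian Bianchi identities are required to see that the non-$R_{scal}$, non-$\Delta_b f$ contributions cancel. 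A related delicate point is bookkeeping the combinatorial factor $(n+1)$ in front of $\Delta_b f$: $f$ enters both through $s_0$ (via $dV_\xi/dV$) and through the normalization of the volume form, and the overall prefactor $e^{-2(n+1)f}$ comes from consistently expressing the final answer in terms of $dV$ rather than $dV_\xi$.
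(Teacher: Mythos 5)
Your overall architecture (use the FIO representation, feed in $s_0$ and $s_1$, rewrite $dV=e^{2(n+1)f}dV_\xi$, identify Tanaka--Webster invariants) points in the right direction, but the proposal is missing the two mechanisms that actually make the coefficient computable, and it treats a nontrivial input as known. First, you never specify how $b^\chi_1(x,x,t)$ is to be extracted: the representation \eqref{eq:asymptotic expansion of chi_k(T_P)} is already the output of the construction in \cite{HHMS23}, so there is no remaining stationary phase to perform on $\chi_k(A)$ itself, and ``redoing the symbol calculus one order further'' would mean re-deriving the second-order transport equations of the functional calculus, which you do not set up. The paper instead closes the computation with the reproducing identity $\tau_k(A)\circ\chi_k(A)=\chi_k(A)$ (with $\tau\equiv1$ on $\operatorname{supp}\chi$): applying the stationary phase formula to this composition and invoking a uniqueness theorem for expansions with the given phase (Theorem~\ref{t-gue241125yyda}, proved via the partial Fourier transform $\reallywidetilde{\chi_k(A)}$ in $y_{2n+1}$) yields the self-consistency relation $a_{1,0}(p,p)=s_1(p,p)$. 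Some device of this kind is indispensable; without it your ``three pieces'' decomposition remains schematic.

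Second, your treatment of the $\chi^{(s)}$-terms is a gap. Since $\mathcal{T}$ is not assumed CR, $b^\chi_1(x,y,t)=\sum_s a_{1,s}(x,y)\chi^{(s)}(t)t^{n+s-1}$ genuinely contains derivatives of $\chi$, and while each term does integrate to a multiple of $\int\chi(t)t^{n-1}dt$, you would then have to compute \emph{every} $a_{1,s}(p,p)$, not just $a_{1,0}$. The paper shows $a_{1,s}(p,p)=0$ for all $s\geq1$ (Theorem~\ref{thm:s1_indep_of_deri_of_chi}) by exploiting the exact operator identity $\tfrac1k\Pi(-i\mathcal{T})\reallywidetilde{\chi_k(A)}=\reallywidetilde{\widehat{\chi}_k(A)}$ with $\widehat{\chi}(t)=t\chi(t)$, comparing the two resulting expansions, and running an induction over test functions $\chi$ with prescribed jets at $m_0$; nothing in your proposal substitutes for this. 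Finally, the value of $s_1(x,x)$ for a general Reeb-invariant volume form is not a citable classical fact: the Hsiao--Shen computation applies only to $dV=dV_\xi$, and the extension to $dV=e^{2(n+1)f}dV_\xi$ (which is precisely where the prefactor $e^{-2(n+1)f}$ and the $(n+1)\Delta_b f$ term originate, via Theorems~\ref{t-gue241120yydk} and~\ref{t-gue241122yyd}) is part of what must be proved. No pseudohermitian Bianchi identities are needed once one uses the Hsiao--Shen normal form for the phase, in which the fourth-order coefficient is already identified with $-\tfrac{i}{2}R_{\mathrm{scal}}(0)$.
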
 

To prove Theorem~\ref{t-gue241115yyd}, we need to understand how the symbol $b^\chi_j(x,y,t)$ in \eqref{Eq:LeadingTermMainThm} depends on \(\chi\) and $t$. We have the following.

\begin{theorem}\label{t-gue241115ycd}
With the notations and assumptions in Theorem~\ref{thm:ExpansionMain}, let $(D,x)$ be any coordinate patch and let 
$\varphi:D\times D\to\C$ be any phase function satisfying 
\eqref{Eq:PhaseFuncMainThm} and \eqref{eq:PiFIO}. Then there exist smooth functions
\begin{eqnarray}\label{Eq:Defa_js}
a_{j,s}(x,y)\in\cC^\infty(D\times D),\ \ j,s=0,1,\ldots,
\end{eqnarray}

such that for any \(\chi\in \mathscr{C}^\infty_c(\R_+)\) we can take $b^\chi_j(x,y,t)$ in \eqref{Eq:LeadingTermMainThm}, $j=0,1,\ldots$, so that 
\begin{equation}\label{eqn:ThmI1z}
    b^\chi_j(x,y,t)=\sum^{+\infty}_{s=0}a_{j,s}(x,y)\chi^{(s)}(t)t^{n+s-j},\quad j=0,1,\ldots,
\end{equation}
where \(\chi^{(s)}:=(\partial/\partial t)^s\chi\) and the infinite sum in \eqref{eqn:ThmI1z} converges uniformly in $\cC^{\infty}(K\times K\times I)$ topology, for any compact subset $K\subset D$, $I\subset\mathbb R$, for all $j=0,1\dots$. 

Assume in addition that $\mathcal{T}$ is a CR vector field, that is, $[\mathcal{T},\cC^\infty(X,T^{1,0}X)]\subset\cC^\infty(X,T^{1,0}X)$.  Then, there exists a phase function 
$\varphi:D\times D\to\C$  satisfying 
\eqref{Eq:PhaseFuncMainThm} and \eqref{eq:PiFIO} such that 
$\mathcal{T}_x\varphi(x,y)\equiv1$, $\mathcal{T}_y\varphi(x,y)\equiv-1$, and for any such phase function we can take the $a_{j,s}(x,y)$ in \eqref{Eq:Defa_js}, $j,s=0,1,\ldots$, such that for all \(j\geq 0\) we have
\begin{equation}\label{e-gue241116yyd}
\begin{split}
&a_{j,s}(x,y)=0,\ \ s=1,2,\ldots,\\
&\mathcal{T}_xa_{j,0}(x,y)=\mathcal{T}_ya_{j,0}(x,y)=0. 
\end{split}
\end{equation}
\end{theorem}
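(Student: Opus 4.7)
My plan is to combine the structure of the Boutet de Monvel--Sj\"ostrand parametrix \eqref{eq:PiFIO} with functional-calculus identities for the self-adjoint operator $A$, in three main steps. For the existence of the representation \eqref{eqn:ThmI1z}, I would revisit the construction of $b^\chi(x,y,t,k)$ in the proof of Theorem~\ref{thm:ExpansionMain} from \cite{HHMS23}. Using the Helffer--Sj\"ostrand formula (or an equivalent Fourier representation of functional calculus), $\chi_k(A)$ is assembled by symbolic calculus starting from the parametrix \eqref{eq:PiFIO}; after the rescaling $t\mapsto kt$ in the $t$-integration, the resulting symbol appears as an asymptotic sum whose building blocks naturally involve $\chi^{(s)}(t)\,t^{n+s-j}$, where the index $s$ counts how many $t$-derivatives have fallen on $\chi$ during integration-by-parts in $t$. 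Formalizing this by induction on $j$, the $S^{n+1}_{\operatorname{loc}}$-estimates on $b^\chi$ give $\mathscr{C}^\infty(K\times K\times I)$-convergence of the series in \eqref{eqn:ThmI1z}; the base case $j=0$ already appears in \eqref{Eq:LeadingTermMainThm}, where only the $s=0$ term survives.

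In the CR case, the flow $\Phi_s$ of $\mathcal{T}$ preserves $T^{1,0}X$, and hence preserves the Szeg\H o projector $\Pi$. Starting from any phase $\varphi_0$ satisfying \eqref{Eq:PhaseFuncMainThm}--\eqref{eq:PiFIO}, I would construct a modified phase $\varphi$ by a Reeb-equivariantization of $\varphi_0$ (either by an averaging of $\varphi_0(\Phi_u(x),\Phi_u(y))$ along the simultaneous Reeb flow together with an anti-symmetrization to achieve the correct first-order behavior in $u$, or via a Kuranishi-type change of variables), absorbing the resulting discrepancy into a revised amplitude. The diagonal conditions $d_x\varphi(x,x)=\xi(x)$, $d_y\varphi(x,x)=-\xi(x)$ combined with $\xi(\mathcal{T})=1$ give $\mathcal{T}_x\varphi=1$ and $\mathcal{T}_y\varphi=-1$ on the diagonal; CR-invariance of $\Pi$ and of the Reeb flow then propagates these identities off the diagonal to all of $D\times D$.

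With such a phase fixed, set $\psi(\lambda):=\lambda\chi(\lambda)$, so $A\chi_k(A) = k\,\psi_k(A)$ by functional calculus, giving
\begin{equation*}
A\chi_k(A)(x,y) = k\int_0^{+\infty}e^{ikt\varphi(x,y)}\,b^\psi(x,y,t,k)\,dt + O(k^{-\infty}).
\end{equation*}
On the other hand, applying $A=\Pi(-i\mathcal{T})\Pi$ on the left of \eqref{eq:asymptotic expansion of chi_k(T_P)}, using $\mathcal{T}_x\varphi\equiv 1$ together with $\Pi\chi_k(A)=\chi_k(A)$ to see that the outer $\Pi$'s leave the FIO class invariant modulo $O(k^{-\infty})$, yields
\begin{equation*}
A\chi_k(A)(x,y) = \int_0^{+\infty}e^{ikt\varphi(x,y)}\bigl[kt\,b^\chi - i\,\mathcal{T}_x b^\chi\bigr](x,y,t,k)\,dt + O(k^{-\infty}).
\end{equation*}
Comparing symbol expansions at order $k^{n+1-j}$ gives $b^\psi_j = t\,b^\chi_j - i\,\mathcal{T}_x b^\chi_{j-1}$. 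Substituting the ansatz \eqref{eqn:ThmI1z} on both sides and using $\psi^{(s)}(t) = s\chi^{(s-1)}(t)+t\chi^{(s)}(t)$, matching coefficients of $\chi^{(r)}(t)\,t^{n+r+1-j}$ produces
\begin{equation*}
(r+1)\,a_{j,r+1}(x,y) = -i\,\mathcal{T}_x a_{j-1,r}(x,y), \qquad r\geq 0,
\end{equation*}
and a parallel argument with $\chi_k(A)A$ and $\mathcal{T}_y\varphi\equiv-1$ gives the analogous recursion with $\mathcal{T}_y$. Since these relations must hold for every $\chi\in\mathscr{C}^\infty_c(\R_+)$, one can consistently choose the $b^\chi_j$ so that $a_{j,s}=0$ for all $s\geq 1$ if and only if $\mathcal{T}_x a_{j,0} = \mathcal{T}_y a_{j,0} = 0$ for all $j\geq 0$, which is \eqref{e-gue241116yyd}. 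The actual existence of such a choice is obtained by solving the diagonal transport equations for $a_{j,0}$ inductively and Borel-summing to extend off-diagonal.

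The main obstacle will be the second step: extending $\mathcal{T}_x\varphi\equiv1$ and $\mathcal{T}_y\varphi\equiv-1$ from formal infinite-order agreement on the diagonal to identities on all of $D\times D$ requires essential use of $\mathcal{T}$ being CR, together with a careful verification that the modified phase retains all the properties \eqref{Eq:PhaseFuncMainThm}--\eqref{eq:PiFIO} with a correspondingly revised amplitude. Once this is in place, the recursion analysis leading to \eqref{e-gue241116yyd} is essentially algebraic.
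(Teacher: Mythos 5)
Your third step---exploiting the functional-calculus identity for $\psi(\lambda)=\lambda\chi(\lambda)$ and comparing the two resulting symbol expansions by choosing $\chi$ with prescribed jets at a point $m_0$---is exactly the mechanism the paper uses (there it appears as $\tfrac{1}{k}\reallywidetilde{\chi_k(A)(-i\mathcal{T})}=\reallywidetilde{\widehat{\chi_k}(A)}$ with $\widehat\chi(t)=t\chi(t)$), and your recursion $(r+1)a_{j,r+1}=-i\mathcal{T}_xa_{j-1,r}$ is consistent with that argument. However, the recursion does not close on its own: for $j=1$ it gives $a_{1,1}=-i\mathcal{T}_xa_{0,0}$, so the vanishing of the $a_{j,s}$ for $s\geq1$ requires $\mathcal{T}_xa_{j,0}=\mathcal{T}_ya_{j,0}=0$ as an \emph{input}, and this is a property to be \emph{proved}, not a choice one is free to make (the $a_{j,0}$ on the diagonal are pinned down by the uniqueness statement, Theorem~\ref{t-gue241125yyda}). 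The paper derives it from the $\mathcal{T}$-invariance of $x\mapsto\chi_k(A)(x,x)$ (a consequence of $[\mathcal{T},\Pi]=0$ when $\mathcal{T}$ is CR) together with the infinite-order vanishing of $\ddbar_{b,x}b_j^\chi(x,y',t)$ at $x=y$, which propagates the invariance off the diagonal via the argument of \cite[Lemma 3.1]{Hsiao_Galasso_JGA_2023}. You gesture at ``solving diagonal transport equations and Borel-summing,'' but that is not an argument. Also, your phase construction by Reeb-averaging is unnecessary: once $\mathcal{T}=\pr/\pr x_{2n+1}$ in suitable coordinates, one simply takes $\varphi(x,y)=x_{2n+1}-y_{2n+1}+\hat\varphi(x',y')$, which is standard for CR $\mathcal{T}$.

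The more serious gap is in the first part. The theorem asserts the representation \eqref{eqn:ThmI1z} for \emph{any} phase satisfying \eqref{Eq:PhaseFuncMainThm} and \eqref{eq:PiFIO}, whereas the construction in \cite{HHMS23} produces, for one particular phase $\varphi_1$, an expansion with only \emph{finitely many} terms in $s$ for each $j$. The infinite sum over $s$, and the convergence claim, arise precisely from changing $\varphi_1$ to an arbitrary admissible $\varphi$: one writes $\varphi=g\varphi_1+f$ with $f$ flat on the diagonal (\cite[Theorem 5.4]{Hsiao_Marinescu_Szego_lower_energy_2017}), applies the Malgrange preparation theorem to put both phases in the form $\alpha(x,y)(-y_{2n+1}+\hat\varphi(x,y'))$, and then uses almost analytic extensions in $t$ and Stokes' theorem to absorb the factor $\alpha/\beta$ into a rescaling $t\mapsto\frac{\alpha}{\beta}t$ of the amplitude; the Borel resummation of the result modulo terms flat at $x=y$ is what produces the convergent series $\sum_s a_{j,s}\chi^{(s)}(t)t^{n+s-j}$. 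Your proposal never addresses the phase-independence issue, and the assertion that the $S^{n+1}_{\mathrm{loc}}$ estimates on $b^\chi$ ``give'' the $\cC^\infty(K\times K\times I)$ convergence of the $s$-sum conflates the asymptotic expansion in $k$ with the (entirely separate) convergence in $s$. As written, the first part of the theorem is not established by your outline.
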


\begin{remark}\label{r-gue241119yyd}
(i) In the proof of Theorem~\ref{t-gue241115yyd}, we show that for a fix point $p\in X$, we can find a phase $\varphi$ satisfying \eqref{Eq:PhaseFuncMainThm} and \eqref{eq:PiFIO} and $a_{j,s}(x,y)$, $j, s=0,1,\ldots$, such that $a_{1,s}(p,p)=0$, for all $s=1,\ldots$, and then we calculate $a_{1,0}(p,p)$ (see Theorem~\ref{thm:s1_indep_of_deri_of_chi} for the details), where $a_{j,s}(x,y)$, $j, s=0,1,\ldots$, are as in \eqref{eqn:ThmI1z}. 

(ii) To get \eqref{e-gue241115ycdb}, we need to generalize the result in~\cite{Hsiao_Shen_2nd_coefficient_BS_2020} to a more general setting (see Theorem~\ref{t-gue241122yyd} for the details). 

(iii) The first part of Theorem~\ref{t-gue241115ycd} holds for a more general choice of the Toeplitz operator \(A\) (see Theorem~\ref{t-gue241123yyd} for the details).  
\end{remark}

From Theorem~\ref{t-gue241115yyd}, Theorem~\ref{t-gue241115ycd} and by using integration by parts in $t$, we can reformulate our Theorem~\ref{t-gue241115yyd}.

\begin{theorem}\label{t-gue241119yyd}
With the notations and assumptions in Theorem~\ref{thm:ExpansionMain}, there exist smooth functions \(\{a_j\}_{j\geq 0}\subset \mathscr{C}^\infty(X)\)  
 such that for any \(\chi\in \mathscr{C}_c^\infty(\R_+)\) we have 
 \begin{equation}\label{e-gue241119yyd}
\chi_k(A)(x,x)\sim\sum^{+\infty}_{j=0}k^{n+1-j}a_j(x)\int\chi(t)t^{n-j}dt\ \ \mbox{in $S^{n+1}_{{\rm loc\,}}(1;X)$}.
 \end{equation}
 Furthermore, we have 
 \begin{equation}
 \begin{split}
 &a_0(x)=\frac{1}{2\pi ^{n+1}}
\frac{dV_{\xi}}{dV}(x),\\
&a_1(x)=\frac{1}{2\pi^{n+1}}e^{-2(n+1)f(x)}\left(\frac{1}{2}R_{scal}(x)+(n+1)\Delta_b f(x)\right),
\end{split}
\end{equation}
where $f(x):=\frac{1}{2(n+1)}\log \frac{dV(x)}{dV_\xi(x)}\in\cC^\infty(X)$ with \(dV_\xi\) given by \eqref{e-gue241115ycd}, $R_{scal}$ is the Tanaka-Webster scalar curvature with respect to the contact form $\xi$ (see \eqref{e-gue241122yydI}) and $\Delta_b:\cC^{\infty}(X)\rightarrow\cC^{\infty}(X)$ denotes the CR sublaplacian operator (see \eqref{e-gue241122yyd}). 
\end{theorem}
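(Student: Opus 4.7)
The plan is to derive Theorem~\ref{t-gue241119yyd} from Theorem~\ref{thm:ExpansionMain}, Theorem~\ref{t-gue241115ycd}, and Theorem~\ref{t-gue241115yyd} by collecting every term in the expansion \eqref{e-gue241115ycdI} into the canonical form $a_j(x)\int\chi(t)t^{n-j}\,dt$. The key observation is that although the structural expansion \eqref{eqn:ThmI1z} involves all derivatives $\chi^{(s)}(t)$, each of them can be integrated back to $\chi(t)\,t^{n-j}$ via integration by parts, thanks to the compact support of $\chi$ in $\R_+$.

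First I would restrict \eqref{eqn:ThmI1z} to the diagonal $y=x$:
\[
b^\chi_j(x,x,t)=\sum_{s=0}^{\infty}a_{j,s}(x,x)\,\chi^{(s)}(t)\,t^{n+s-j}.
\]
Using the uniform $\mathscr{C}^\infty$-convergence of this sum on compacta together with the fact that every $\chi^{(s)}$ is compactly supported in $\R_+$, one interchanges the $t$-integration with the sum over $s$ to obtain
\[
b^\chi_j(x)=\sum_{s=0}^{\infty}a_{j,s}(x,x)\int_0^{+\infty}\chi^{(s)}(t)\,t^{n+s-j}\,dt.
\]

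Next I would perform $s$ successive integrations by parts in $t$ in each term. All boundary contributions vanish because $\operatorname{supp}\chi$ is compact in $\R_+$, giving
\[
\int_0^{+\infty}\chi^{(s)}(t)\,t^{n+s-j}\,dt=(-1)^s\prod_{\ell=1}^{s}(n+\ell-j)\int_0^{+\infty}\chi(t)\,t^{n-j}\,dt,
\]
where the product is understood as $1$ when $s=0$. Setting
\[
a_j(x):=\sum_{s=0}^{\infty}(-1)^s\prod_{\ell=1}^{s}(n+\ell-j)\,a_{j,s}(x,x),
\]
the identity $b^\chi_j(x)=a_j(x)\int\chi(t)t^{n-j}\,dt$ holds. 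Smoothness of $a_j$ on each coordinate patch follows from the smoothness of the individual $a_{j,s}(x,x)$ combined with the convergence clause of Theorem~\ref{t-gue241115ycd}. Globality and independence of $\chi$ come from the observation that, for any $\chi\in\mathscr{C}^\infty_c(\R_+)$ with $\int\chi(t)t^{n-j}\,dt\neq 0$, one has $a_j(x)=b^\chi_j(x)/\int\chi(t)t^{n-j}\,dt$, and $b^\chi_j(x)$ is a globally defined coefficient in \eqref{e-gue241115ycdI}.

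Finally I would substitute back into \eqref{e-gue241115ycdI} to get \eqref{e-gue241119yyd}, read off $a_0(x)=\frac{1}{2\pi^{n+1}}\frac{dV_\xi}{dV}(x)$ from the leading-order formula in \eqref{Eq:LeadingTermMainThm} (taking $\lambda\equiv 1$), and note that the formula for $a_1(x)$ is the content of Theorem~\ref{t-gue241115yyd}. The heavy lifting has already been carried out in Theorems~\ref{t-gue241115ycd} and~\ref{t-gue241115yyd}; the only modest issue here is justifying the interchange of summation and integration, which is handled by the uniform $\mathscr{C}^\infty(K\times K\times I)$-convergence asserted in Theorem~\ref{t-gue241115ycd}. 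Thus the proof is essentially a bookkeeping step once the two preceding theorems are in hand.
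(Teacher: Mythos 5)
Your proposal is correct and follows exactly the route the paper takes: the paper derives Theorem~\ref{t-gue241119yyd} from Theorem~\ref{t-gue241115ycd} and Theorem~\ref{t-gue241115yyd} precisely "by using integration by parts in $t$", which is the computation $\int\chi^{(s)}(t)t^{n+s-j}dt=(-1)^s\prod_{\ell=1}^{s}(n+\ell-j)\int\chi(t)t^{n-j}dt$ that you carry out, together with the $\chi$-independence of the $a_{j,s}$ and the identification of $a_0$, $a_1$ from \eqref{Eq:LeadingTermMainThm} and \eqref{e-gue241115ycdb}. Your justification of the interchange of sum and integral via the uniform $\cC^\infty$-convergence clause, and of globality via $a_j(x)=b^\chi_j(x)/\int\chi(t)t^{n-j}dt$ for a single $\chi$ with nonvanishing moment, supplies the bookkeeping the paper leaves implicit.
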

We note that the formula for the coefficient \(a_0\) is a direct consequence from the results in \cite{HHMS23}. Furthermore, it was also shown there that for the circle bundle case the expansion of the form described in~\eqref{e-gue241119yyd} holds (see Example~\ref{Ex:RelationToBKExpansion}). Hence, Theorem~\ref{t-gue241119yyd} extends the results in \cite{HHMS23} by showing that the expansion of the form described in~\eqref{e-gue241119yyd} holds under more general assumptions and provides an explicit formula for \(a_1\). 
\begin{remark}\label{rmk:ajLocalInvariants}
	The functions \(a_j\), \(j\geq 0\), in Theorem~\ref{t-gue241119yyd} are uniquely determined by~\eqref{e-gue241119yyd}.
		Furthermore, for any \(j=0,1,\ldots\) and any point \(x\in X\) the value \(a_j(x)\) of the function \(a_j\) in Theorem~\ref{t-gue241119yyd} is determined by the restriction of \(T^{1,0}X\), \(\xi\) and \(dV\) to any neighborhood of \(x\). More precisely, we have the following: Given a compact CR manifold \((X',T^{1,0}X')\) with one form $\xi'\in\cC^\infty(X',T^*X')$ and volume form \(dV'\) such that \((X',T^{1,0}X')\),  \(\xi'\) and \(dV'\) satisfy the assumptions in Theorem~\ref{thm:ExpansionMain} denote by \(\{a'_j\}_{j\geq0}\subset \mathscr{C}^\infty(X')\) the smooth functions given by Theorem~\ref{t-gue241119yyd} associated to the data  \((X',T^{1,0}X')\), \(\xi'\) and \(dV'\). Let \(D\subset X\) be an open neighborhood of a point \(x\in X\) and let \(D'\subset X'\) be an open neighborhood of the point \(x'\in X'\). If  \(F\colon D'\to D\) is a CR diffeomorphism   with \(F(x')=x\), \(F^*\xi=\xi'\) and \(F^*dV=dV'\) on \(D'\) we have \(a_j'(x')=a_j(x)\) for \(j=0,1,\ldots\) (see Remark~\ref{rmk:verifyargumentrmklocalinvariant}). In particular, choosing \(dV=dV_\xi\) we have that the functions \(a_j\), \(j=0,1,\ldots\), become pseudo-Hermitian invariants for the triple \((X,T^{1,0}X,\xi)\).  
\end{remark}
\begin{example}\label{Ex:RelationToBKExpansion}
	Let \((L,h)\to M\) be a positive holomorphic line bundle with Hermitian metric \(h\) over a compact complex manifold \(M\) of dimension \(\dim_\C M=n\) and volume form \(dV_M\). Denote the Bergman kernel function for \(H^0(M,L^k)\), \(k\in \N\), with respect to the \(L^2\) norm
	\[\|s\|_k:=\sqrt{\int_{M}|s|^2_{h^k}dV_M},\,\,\,\, s\in H^0(M,L^k)\]
	 by \(P_k\) that is \(P_k(x):=\sup\{|s(x)|^2_{h^k}\mid s\in H^0(M,L^k), \|s\|_k\leq1\}\), \(x\in M\). Here \(L^k\) denotes the \(k\)-th tensor power of \(L\) and \(h^k\) is the Hermitian metric on \(L^k\) induced by \(h\). It is well known (see \cite{Catlin_BKE_1999}, \cite{Ma_Marinescu_HMI_BKE_2007}, \cite{Zelditch_BKE_1998}) that \(P_k\) has an asymptotic expansion
	\[P_k(x)\sim\sum^{+\infty}_{j=0}k^{n-j}b_j(x)\ \ \mbox{in $S^{n}_{{\rm loc\,}}(1;M)$},\]
	for smooth functions \(b_j\in\cC^\infty(M,\R)\), \(j\geq 0\).
	Put
	\[X:=\{v\in L^*\mid |v|_{h^*}=1\} \subset L,\,\,\,T^{1,0}X:=\C TX\cap T^{1,0}L^*.\]
	Here \(L^*\) denotes the dual line bundle with metric \(h^*\) induced by \(h\). Then \((X,T^{1,0}X)\) is a compact strongly pseudoconvex CR manifold with a transversal CR \(S^1\)-action given by \(S^1\times X\ni(\lambda,v)\mapsto \lambda v \in X\). Denote by \(\mathcal{T}\) the infinitesimal generator of this action and let \(\xi\in \cC^\infty(X,T^*X)\) be the uniquely defined real one form with \(\mathcal{T}\lrcorner\xi\equiv1\) and \(\xi(T^{1,0}X)=0\). Let \(\pi\colon X\to M\) denote the projection map. Putting \(dV_X=\xi\wedge\pi^*dV_M\) it follows that the setup satisfies the assumptions in Theorem~\ref{t-gue241119yyd}. Hence there exist smooth functions \(a_j\in\cC^\infty(X,\R)\), \(j\geq 0\), such that for any \(\chi\in \cC_c^\infty(\R_+)\) we have
	\begin{eqnarray}\label{eq:ExpansionOnCircleBundleCase}
		\chi_k(A)(x,x)\sim\sum^{+\infty}_{j=0}k^{n+1-j}a_j(x)\int\chi(t)t^{n-j}dt\ \ \mbox{in $S^{n+1}_{{\rm loc\,}}(1;X)$}.
	\end{eqnarray}
	We note that \eqref{eq:ExpansionOnCircleBundleCase} was already obtained in \cite{HHMS23} for the circle bundle case and it was shown  that \(a_j=\frac{1}{2\pi}b_j\circ \pi\), \(j\geq 0\). For a more detailed  study of the relation between the coefficients \(a_1\) and \(b_1\) we refer to Remark~\ref{rmk:CoefficientsOnComplexManifolds}.  
\end{example}

\section{Preliminaries}\label{s-gue241119yyd}
\subsection{Notations}
We use the following notations throughout this article 
$\mathbb{Z}$ is the set of integers, 
$\N=\{1,2,3,\ldots\}$ is the set of natural numbers and we 
put $\N_0=\N\bigcup\{0\}$; $\mathbb R$ is the set of
real numbers. Also, $\R_+:=\{x\in\R:x>0\}$ and 
$\overline{\mathbb R}_+=\R_+\cup\{0\}$.
For a multi-index $\alpha=(\alpha_1,\ldots,\alpha_n)\in\N^n_0$ 
and $x=(x_1,\ldots,x_n)\in\mathbb R^n$, we set
\begin{equation}
\begin{split}
&x^\alpha=x_1^{\alpha_1}\ldots x^{\alpha_n}_n,\\
& \partial_{x_j}=\frac{\partial}{\partial x_j}\,,\quad
\partial^\alpha_x=\partial^{\alpha_1}_{x_1}\ldots\partial^{\alpha_n}_{x_n}
=\frac{\partial^{|\alpha|}}{\partial x^\alpha}\cdot
\end{split}
\end{equation}
Let $z=(z_1,\ldots,z_n)$, $z_j=x_{2j-1}+ix_{2j}$, $j=1,\ldots,n$, 
be coordinates on $\C^n$. We write
\begin{equation}
\begin{split}
&z^\alpha=z_1^{\alpha_1}\ldots z^{\alpha_n}_n\,,
\quad\ol z^\alpha=\ol z_1^{\alpha_1}\ldots\ol z^{\alpha_n}_n\,,\\
&\partial_{z_j}=\frac{\partial}{\partial z_j}=
\frac{1}{2}\Big(\frac{\partial}{\partial x_{2j-1}}-
i\frac{\partial}{\partial x_{2j}}\Big)\,,
\quad\partial_{\ol z_j}=\frac{\partial}{\partial\ol z_j}
=\frac{1}{2}\Big(\frac{\partial}{\partial x_{2j-1}}+
i\frac{\partial}{\partial x_{2j}}\Big),\\
&\partial^\alpha_z=\partial^{\alpha_1}_{z_1}\ldots\partial^{\alpha_n}_{z_n}
=\frac{\partial^{|\alpha|}}{\partial z^\alpha}\,,\quad
\partial^\alpha_{\ol z}=\partial^{\alpha_1}_{\ol z_1}
\ldots\partial^{\alpha_n}_{\ol z_n}
=\frac{\partial^{|\alpha|}}{\partial\ol z^\alpha}\,.
\end{split}
\end{equation}
For $j, s\in\mathbb Z$, set $\delta_{js}=1$ if $j=s$, 
$\delta_{js}=0$ if $j\neq s$.  

Let $M$ be a smooth paracompact manifold. We let $TM$ and $T^*M$ denote respectively the tangent bundle of $M$ and the cotangent bundle of $M$. The complexified tangent bundle $TM \otimes \mathbb{C}$ of $M$ will be denoted by $\mathbb C TM$, similarly we write $\mathbb C T^*M$ for the complexified cotangent bundle of $M$. Consider $\langle\,\cdot\,,\cdot\,\rangle$ to denote the pointwise
duality between $TM$ and $T^*M$; we extend $\langle\,\cdot\,,\cdot\,\rangle$ bi-linearly to $\mathbb CTM\times\mathbb C T^*M$. 
Let $Y\subset M$ be an open set and let $B$ be a vector bundle over $Y$. From now on, the spaces of distribution sections of $B$ over $Y$ and smooth sections of $B$ over $Y$ will be denoted by $\mathcal D'(Y, B)$ and $\cC^\infty(Y, B)$, respectively.
Let $\mathcal E'(Y, B)$ be the subspace of $\mathcal D'(Y, B)$ whose elements have compact support in $Y$. Let $\cC^\infty_c(Y,B):=\cC^\infty(Y,B)\cap\mathcal{E}'(Y,B)$.

\subsection{Some standard notations in microlocal and semi-classical analysis}\label{s-gue170111w} 

Let us first introduce some notions of microlocal analysis used here. Let $D\subset\R^{2n+1}$ be an open set. 
	
	\begin{definition}\label{d-gue241119ycdp}
		For any $m\in\mathbb R$, $S^m_{1,0}(D\times D\times\mathbb{R}_+)$ 
		is the space of all $s(x,y,t)\in\cC^\infty(D\times D\times\mathbb{R}_+)$ 
		such that for all compact sets $K\Subset D\times D$, all $\alpha, 
		\beta\in\N^{2n+1}_0$ and $\gamma\in\N_0$, 
		there is a constant $C_{K,\alpha,\beta,\gamma}>0$ satisfying the estimate
		\begin{equation}
		\left|\partial^\alpha_x\partial^\beta_y\partial^\gamma_t a(x,y,t)\right|\leq 
		C_{K,\alpha,\beta,\gamma}(1+|t|)^{m-|\gamma|},\ \ 
		\mbox{for all $(x,y,t)\in K\times\mathbb R_+$, $|t|\geq1$}.
		\end{equation}
		We put $S^{-\infty}(D\times D\times\mathbb{R}_+)
		:=\bigcap_{m\in\mathbb R}S^m_{1,0}(D\times D\times\mathbb{R}_+)$.
	\end{definition}
	Let $s_j\in S^{m_j}_{1,0}(D\times D\times\mathbb{R}_+)$, 
	$j=0,1,2,\ldots$ with $m_j\rightarrow-\infty$ as $j\rightarrow+\infty$. 
	By the argument of Borel construction, there always exists $s\in S^{m_0}_{1,0}(D\times D\times\mathbb{R}_+)$ 
	unique modulo $S^{-\infty}$ such that 
	\begin{equation}
	s-\sum^{\ell-1}_{j=0}s_j\in S^{m_\ell}_{1,0}(D\times D\times\mathbb{R}_+)
	\end{equation}
	for all $\ell=1,2,\ldots$. If $s$ and $s_j$ have the properties above, we write
	\begin{equation}
	s(x,y,t)\sim\sum^{+\infty}_{j=0}s_j(x,y,t)~\text{in}~ 
	S^{m_0}_{1,0}(D\times D\times\mathbb{R}_+).
	\end{equation}
	Also, we use the notation 
	\begin{equation}  
	s(x, y, t)\in S^{m}_{{\rm cl\,}}(D\times D\times\mathbb{R}_+)
	\end{equation}
	if $s(x, y, t)\in S^{m}_{1,0}(D\times D\times\mathbb{R}_+)$ and we can find $s_j(x, y)\in\cC^\infty(D\times D)$, $j\in\N_0$, such that 
	\begin{equation}
	s(x, y, t)\sim\sum^{+\infty}_{j=0}s_j(x, y)t^{m-j}\text{ in }S^{m}_{1, 0}
	(D\times D\times\mathbb{R}_+).
	\end{equation}
For smooth paracompact manifolds $M_1, M_2$,
we define the symbol spaces 
$$S^m_{{1,0}}(M_1\times M_2\times\mathbb R_+),
\quad S^m_{{\rm cl\,}}(M_1\times M_2\times\mathbb R_+)$$ 
and asymptotic sums thereof in the standard way.

Let $W_1$ be an open set in $\mathbb R^{N_1}$ and let $W_2$ be an open set in $\mathbb R^{N_2}$. Let $E$ and $F$ be vector bundles over $W_1$ and $W_2$, respectively. For any continuous operator $P: \cC^\infty_c(W_2,F)\To\mathcal{D}'(W_1,E)$, we write $P(x,y)$ to denote the distribution kernel of $P$.
A $k$-dependent continuous operator
$A_k: \cC^\infty_c(W_2,F)\To\mathcal{D}'(W_1,E)$ is called $k$-negligible on $W_1\times W_2$
if, for $k$ large enough, $A_k$ is smoothing and, for any $K\Subset W_1\times W_2$, any
multi-indices $\alpha$, $\beta$ and any $N\in\mathbb N$, there exists $C_{K,\alpha,\beta,N}>0$
such that
\[
\abs{\pr^\alpha_x\pr^\beta_yA_k(x, y)}\leq C_{K,\alpha,\beta,N}k^{-N}\:\: \text{on $K$},\ \ \forall k\gg1.
\]
In that case we write
\[A_k(x,y)=O(k^{-\infty})\:\:\text{on $W_1\times W_2$,} \quad
\text{or} \quad
A_k=O(k^{-\infty})\:\:\text{on $W_1\times W_2$.}\]
If $A_k, B_k: \cC^\infty_c(W_2, F)\To\mathcal{D}'(W_1, E)$ are $k$-dependent continuous operators,
we write $A_k= B_k+O(k^{-\infty})$ on $W_1\times W_2$ or $A_k(x,y)=B_k(x,y)+O(k^{-\infty})$ on $W_1\times W_2$ if $A_k-B_k=O(k^{-\infty})$ on $W_1\times W_2$. 
When $W=W_1=W_2$, we sometime write ``on $W$".

Let $X$ and $M$ be smooth manifolds and let $E$ and $F$ be vector bundles over $X$ and $M$, respectively. Let $A_k, B_k: \cC^\infty(M,F)\To\cC^\infty(X,E)$ be $k$-dependent smoothing operators. We write $A_k=B_k+O(k^{-\infty})$ on $X\times M$ if on every local coordinate patch $D$ of $X$ and local coordinate patch $D_1$ of $M$, $A_k=B_k+O(k^{-\infty})$ on $D\times D_1$.
When $X=M$, we sometime write on $X$.

We recall the definition of the semi-classical symbol spaces

\begin{definition} \label{d-gue140826}
Let $W$ be an open set in $\mathbb R^N$. Let
\[
S(1)=S(1;W):=\Big\{a\in\cC^\infty(W);\, \forall\alpha\in\mathbb N^N_0:
\sup_{x\in W}\abs{\pr^\alpha a(x)}<\infty\Big\}\,,\]
and let $S^0_{{\rm loc\,}}(1;W)$ be
\[\Big\{(a(\cdot,k))_{k\in\mathbb R};\,\forall\alpha\in\mathbb N^N_0,
\forall \chi\in \cC^\infty_c(W)\,:\:\sup_{k\in\mathbb R, k\geq1}\sup_{x\in W}\abs{\pr^\alpha(\chi a(x,k))}<\infty\Big\}\,.
\]
Hence $a(\cdot,k)\in S^\ell_{{\rm loc}}(1;W)$ if for every $\alpha\in\mathbb N^N_0$ and $\chi\in\cC^\infty_c(W)$, there
exists $C_\alpha>0$ independent of $k$, such that $\abs{\pr^\alpha (\chi a(\cdot,k))}\leq C_\alpha k^{\ell}$ holds on $W$.

Consider a sequence $a_j\in S^{\ell_j}_{{\rm loc\,}}(1)$, $j\in\N_0$, where $\ell_j\searrow-\infty$,
and let $a\in S^{\ell_0}_{{\rm loc\,}}(1)$. We say
\[
a(\cdot,k)\sim
\sum\limits^\infty_{j=0}a_j(\cdot,k)\:\:\text{in $S^{\ell_0}_{{\rm loc\,}}(1)$},
\]
if, for every
$N\in\N_0$, we have $a-\sum^{N}_{j=0}a_j\in S^{\ell_{N+1}}_{{\rm loc\,}}(1)$ .
For a given sequence $a_j$ as above, we can always find such an asymptotic sum
$a$, which is unique up to an element in
$S^{-\infty}_{{\rm loc\,}}(1)=S^{-\infty}_{{\rm loc\,}}(1;W):=\cap _\ell S^\ell_{{\rm loc\,}}(1)$.

Let $\ell\in\mathbb R$ and let
\[
S^\ell_{{\rm loc},{\rm cl\,}}(1):=S^\ell_{{\rm loc},{\rm cl\,}}(1;W)
\] be the set of all $a\in S^\ell_{{\rm loc}}(1;W)$ such that we can find $a_j\in\cC^\infty(W)$ independent of $k$, $j=0,1,\ldots$,  such that 
\[
a(\cdot,k)\sim
\sum\limits^\infty_{j=0}k^{\ell-j}a_j(\cdot)\:\:\text{in $S^{\ell_0}_{{\rm loc\,}}(1)$}.
\]

Similarly, we can define $S^\ell_{{\rm loc\,}}(1;Y)$ in the standard way, where $Y$ is a smooth manifold.
\end{definition}
\begin{definition}\label{def:suppt}
	Let \(U\) be an open set in \(\R^N\) and let \(F\colon U\times \R\) be a function \((x,t)\mapsto F(x,t)\). Given a subset \(I\subset \R\) we say \(\operatorname{supp}_tF(x,t)\subset I\) if for any \((x,t)\in U\times \R\) with \(t\notin I\) we have \(F(x,t)=0\).
\end{definition}
\subsection{CR geometry}\label{s-gue241119ycd}

We recall some notations concerning CR geometry. Let $X$ be a smooth orientable manifold of real dimension $2n+1,~n\geq 1$. 
We say $X$ is a (codimension one) Cauchy--Riemann (CR for short) 
manifold with CR structure \(T^{1,0}X\) if 
$T^{1,0}X\subset\mathbb{C}TX$ is a subbundle such that
\begin{enumerate}
\item[(i)] $\dim_{\mathbb{C}}T^{1,0}_{p}X=n$ for any $p\in X$.
\item[(ii)] $T^{1,0}_p X\cap T^{0,1}_p X=\{0\}$ for any $p\in X$, where 
$T^{0,1}_p X:=\overline{T^{1,0}_p X}$.
\item[(iii)] For $V_1, V_2\in \mathscr{C}^{\infty}(X,T^{1,0}X)$, 
we have $[V_1,V_2]\in\mathscr{C}^{\infty}(X,T^{1,0}X)$, where 
$[\cdot,\cdot]$ stands for the Lie bracket between vector fields. 
\end{enumerate}

From now on, we assume that $(X,T^{1,0}X)$ is a compact CR manifold of dimension $2n+1$, $n\geq1$. Since $X$ is orientable, there is a global one form $\xi\in\cC^\infty(X,T^*X)$ such that $u\lrcorner\xi=0$, for every $u\in T^{1,0}X$ and $\xi(x)\neq0$, for every $x\in X$ where \(\lrcorner\) denotes the interior product between vectors and forms. We call $\xi$ a characteristic form on $X$.

\begin{definition}\label{D:Leviform}
The Levi form $\mathcal{L}_x=\mathcal{L}^{\xi}_x$ of $X$ at $x\in X$ 
associated to a characteristic form $\xi$ is the  Hermitian form on $T^{1,0}_xX$ given by 
\begin{equation}\label{eq:2.12b}
\mathcal{L}_x:T^{1,0}_xX\times T^{1,0}_xX\to\C,
\:\: \mathcal{L}_x(U,V)=\frac{1}{2i}d\xi(U, \ol V).
\end{equation}  
\end{definition}

\begin{definition}\label{d-gue241119yydq}
A CR manifold $(X,T^{1,0}X)$ is said to be  strictly pseudoconvex if 
there exists a characteristic $1$-form $\xi$
such that for every $x\in X$ the Levi form
$\mathcal{L}^{\xi}_x$ is positive definite. 
\end{definition} 
\begin{remark}
 Given a  strictly pseudoconvex CR manifold $(X,T^{1,0}X)$ we have that \(\operatorname{Re}T^{1,0}X\) defines a contact structure on \(X\) where any characteristic $1$-form $\xi$ is a contact form for this contact structure. This follows from the observation that \(\ker \xi=\operatorname{Re}T^{1,0}X\) with \(\dim_\R \operatorname{Re}T^{1,0}X=2n\) and $\xi\wedge(d\xi)^n\neq 0$, where $\dim_\R X=2n+1$.
\end{remark}
From now on, we assume that $(X,T^{1,0}X)$ is a compact strictly pseudoconvex CR manifold of dimension $2n+1$, $n\geq1$, and we fix a characteristic one form $\xi$ on $X$ such that the associated Levi form $\mathcal{L}_x$ is positive definite at every point of $x\in X$.

There exists a unique vector field $\mathcal{T}\in\cC^\infty(X,TX)$ such that $\mathcal{T}\lrcorner\xi\equiv1$ and $\mathcal{T}\lrcorner d\xi\equiv 0$. We call $\mathcal{T}$ the Reeb vector field on $X$ with respect to $\xi$.

The Levi form $\mathcal{L}_x$ induces a Hermitian metric $\langle\,\cdot\,|\,\cdot\,\rangle$ on $\mathbb CTX$ given by 
 \begin{equation}\label{e-gue241115yyd}
 \begin{split}
&\langle\,u\,|\,v\,\rangle:=\mathcal{L}_x(u,\ol v),\ \ u, v\in T^{1,0}_xX,\\
&T^{1,0}X\perp T^{0,1}X,\\
&\mathcal{T}\perp(T^{1,0}X\oplus T^{0,1}X),\ \ \langle\,T\,|\,T\,\rangle=1.
\end{split}
 \end{equation}
We call $\langle\,\cdot\,|\,\cdot\,\rangle$ the Levi metric. Denote  $T^{*1,0}X$ and $T^{*0,1}X$ the dual bundles in 
$\C T^*X$ annihilating $\C\mathcal{T}\bigoplus T^{0,1}X$
and $\C\mathcal{T}\bigoplus T^{1,0}X$, respectively. 
Define the bundles of $(p,q)$ forms by $T^{*p,q}X:=(\Lambda^pT^{*1,0}X)\wedge(\wedge^qT^{*0,1}X)$. 
The Hermitian $\langle\,\cdot\,|\,\cdot\,\rangle$ on $\mathbb CTX$ induces a Hermitian metric 
$\langle\,\cdot\,|\,\cdot\,\rangle$ on $\oplus^n_{p,q=0}T^{*p,q}X$. For $p,q=0,1,\ldots,n$, let $\Omega^{p,q}(X):=\cC^\infty(X,T^{*p,q}X)$.

With respect to the given Hermitian metric 
$\langle\cdot|\cdot\rangle$, for $p, q\in\mathbb N_0$, $0\leq p,q\leq n$, we consider the orthogonal projection
\begin{equation}
\pi^{(p,q)}:\Lambda^{p+q}\mathbb{C}T^*X\to T^{*p,q}X.
\end{equation}
The tangential Cauchy--Riemann operator is defined to be 
\begin{equation}
\label{tangential Cauchy Riemann operator}
\overline{\partial}_b:=\pi^{(0,q+1)}\circ d:
\Omega^{0,q}(X)\to\Omega^{0,q+1}(X).
\end{equation}

Fix a volume form $dV$ on $X$ such that $\mathcal{L}_{\mathcal{T}}dV\equiv 0$, where $\mathcal{L}_{\mathcal{T}}$ denotes the Lie derivative of $\mathcal{T}$. Let $(\,\cdot\,|\,\cdot\,)$ be the $L^2$ inner product on $\Omega^{p,q}(X)$ induced by $dV$ and $\langle\,\cdot\,|\,\cdot\,\rangle$ and let $L^2_{(p,q)}(X)$ be the completion of $\Omega^{p,q}(X)$ with respect to $(\,\cdot\,|\,\cdot\,)$, $p, q\in\mathbb N_0$, $0\leq p,q\leq n$. We write $L^2(X):=L^2_{(0,0)}(X)$.
We extend $\ddbar_b$ to the $L^2$ space:
\[\ddbar_b: {\rm Dom\,}\ddbar_b\subset L^2(X)\To L^2_{(0,1)}(X),\]
where ${\rm Dom\,}\ddbar_b:=\set{u\in L^2(X);\, \ddbar_bu\in L^2_{(0,1)}(X)}$. In this work, w eassume that $X$ is embeddable, that is,  we can find a CR embedding 
\[F: X\To\mathbb C^N,\]
for some $N\in\mathbb N$. Moreover, $X$ is embeddable if and only if $\ddbar_b$ has $L^2$ closed range.  

We also recall some geometric quantities in pseudo-Hermitian geometry. The Hermitian metric $\langle\,\cdot\,|\,\cdot\,\rangle$ and the volume form $dV_\xi$ induce an $L^2$ inner product $(\,\cdot\,|\,\cdot\,)_\xi$ on $\Omega^{p,q}(X)$, $p, q\in\mathbb N_0$, $0\leq p,q\leq n$. Let 
\[\pr_b:=\pi^{1,0}\circ d: \cC^\infty(X)\To\Omega^{1,0}(X)\]
and let 
\[d_b:=\pr_b+\ddbar_b: \cC^\infty(X)\To\Omega^{0,1}(X)\oplus\Omega^{1,0}(X).\]
The \textit{sublaplacian operator} $\Delta_b:\cC^{\infty}(X)\rightarrow\cC^{\infty}(X)$ is defined by 
\begin{equation}\label{e-gue241122yyd}
(\,\Delta_b f\,|\,u\,)_\xi=\frac{1}{2}(\,d_bf\,|\,d_bu\,)_{\xi},\quad f\in\cC^\infty(X), u\in\cC^{\infty}(X).\end{equation}

\begin{definition}
    Given a contact form $\xi$ on $X,$ and $J$ a complex structure on $$HX:=\operatorname{Re}(T^{1,0}X\oplus T^{0,1}X).$$ There exists an unique affine connection $\nabla:=\nabla^{\xi,J}$ with respect to $\xi$ and $J$ such that
    \begin{enumerate}
        \item $\nabla_{v}\cC^\infty(X,HX)\subset\cC^\infty(X,HX)$ for any $v\in\cC^\infty(X,TX),$
        \item $\nabla\mathcal{T}=\nabla J=\nabla(d\xi)=0.$
        \item The torsion $\tau$ of $\nabla$ is defined by $\tau(u,u)=\nabla_u v-\nabla_vu-[u,v]$ and satisfies
        $\tau(u,v)=d\xi(u,v)\mathcal{T},$ $\tau(\mathcal{T},Jv)=-J(\mathcal{T},v)$ for any $u,v\in\cC^\infty(X,TX),$
    \end{enumerate}
\end{definition}
Let $\{L_\alpha\}_{\alpha=1}^n$ be a local frame of $T^{1,0}X$. The dual frame of $\{L_\alpha\}_{\alpha=1}^n$ is denoted by $\{\xi^{\alpha}\}_{\alpha=1}^n.$ The connection one form $\omega_\alpha^\beta$ is defined by
$$\nabla L_{\alpha}=\omega_\alpha^\beta\otimes L_{\beta}.$$
Let $L_{\overline{\alpha}}$ and $\xi^{\overline{\alpha}}$ denote $\overline{L_\alpha}$ and $\overline{\xi^{\alpha}}$ respectively, we also have $\nabla L_{\overline{\alpha}}=\omega^{\overline{\beta}}_{\overline{\alpha}}\otimes L_{\overline{\beta}}.$ The Tanaka-Webster curvature two form $\Theta_{\alpha}^{\beta}$ is defined by
\begin{equation}\label{eqn:def_tw2form}
\Theta_{\alpha}^{\beta}=d\omega_{\alpha}^{\beta}-\omega_\alpha^{\gamma}\wedge\omega_\gamma^{\beta}.
\end{equation}
It is straightforward to have the following expression for \eqref{eqn:def_tw2form}:
\begin{equation}\label{eqn:tw2form compute}
    \Theta^\beta_\alpha=R^\beta_{\alpha j \overline{k}}\xi^{j}\wedge\xi^{\overline{k}}+ A^{\alpha}_{\beta j k}\xi^{j}\wedge\xi^k+B^\alpha
_{\beta j k}\xi^{\overline{j}}\wedge\xi^{\overline{k}}+C\wedge\xi
\end{equation} for some one form $C$.
The pseudohermitian Ricci curvature  $R_{\alpha \overline{k}}$ is defined by 
$$ R_{\alpha \overline{k}}:=\sum_{\beta=j=1}^n R^\beta_{\alpha j \overline{k}}. 
$$ 
Write $d\xi=ih_{\alpha\ol{\beta}}\xi^\alpha\wedge\xi^{\ol\beta}$ and let $h^{\ol cd}$ be the inverse matrix of $h_{a\ol b}$. 
The Tanaka-Webster scalar curvature $R_{{\rm scal\,}}$ is defined by 
\begin{equation}\label{e-gue241122yydI}
R_{\rm scal}:=\sum_{\alpha,k=1}^n h^{\alpha \overline{k}}R_{\alpha\overline{k}}.
\end{equation}

\section{Proofs of Theorem~\ref{t-gue241115yyd} and Theorem~\ref{t-gue241115ycd}}\label{s-gue241120yyd}
{\tiny{{\color{white}{\subsection{ }}}}}
The main goal of this section is to prove Theorem~\ref{t-gue241115yyd}. We first generalize the result in~\cite{Hsiao_Shen_2nd_coefficient_BS_2020} to a more general setting. Let us first recall the result in~\cite{Hsiao_Shen_2nd_coefficient_BS_2020}.

\begin{theorem}\label{t-gue241120yyd}
With the notations and assumptions in Theorem~\ref{thm:ExpansionMain}, assume $dV_\xi=dV$. Recall that we work with the assumption that $X$ is embeddable. Let $(D,x)$ be any coordinate patch and let 
$\varphi:D\times D\to\C$ be any phase function satisfying 
\eqref{Eq:PhaseFuncMainThm} with $\lambda(x)\equiv1$ and \eqref{eq:PiFIO}. Suppose that 
\begin{equation}\label{e-gue241120yyd}
\mbox{$\ddbar_{b,x}\varphi(x,y)$ vainishes to infinite order on $x=y$}.
\end{equation}
Suppose further that
\begin{equation}\label{e-gue241120ycdag}
\mathcal{T}_ys(x,y,t)=0, \ \ \mathcal{T}_ys_j(x,y)=0,\ \ j=0,1,\ldots,
\end{equation}
where $s(x,y,t)$, $s_j(x,y)$, $j=0,1,\ldots$, are as in \eqref{eq:PiFIO}. Then, $s_0(x,y)$ has unique Taylor expansion at $x=y$, $s_1(x,x)$ is a global defined as a smooth function on $X$, 
\begin{equation}\label{e-gue241120ycdb}
\begin{split}
&\mbox{$s_0(x,y)-\frac{1}{2\pi^{n+1}}$ vanishes to infinite order at $x=y$, for every $x\in D$}, 
\end{split}
\end{equation}
and
\begin{equation}\label{e-gue201221yydb}
\mbox{$s_1(x,x)=\frac{1}{4\pi^{n+1}}R_{\mathrm{scal}}(x)$, for every $x\in D$}, 
\end{equation}
where $R_{{\rm scal\,}}$ is the Tanaka--Webster scalar curvature on $X$ (see \eqref{e-gue241122yydI}).
\end{theorem}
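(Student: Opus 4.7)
The plan is to adapt the strategy of \cite{Hsiao_Shen_2nd_coefficient_BS_2020} to the present more general setting, where the only geometric assumptions on $\varphi$ and $s$ are the gauge conditions \eqref{e-gue241120yyd} and \eqref{e-gue241120ycdag}. First I would reduce to a neighborhood of a fixed point $p\in X$. The conditions $d_x\varphi(x,x)=\xi$ and $\operatorname{Im}\varphi\geq 0$ pin down the linear and quadratic Taylor jet of $\varphi$ along $x=y$, and the requirement that $\ddbar_{b,x}\varphi$ vanish to infinite order there recursively fixes every higher Taylor coefficient of $\varphi$ along the diagonal modulo functions vanishing to infinite order. Once $\varphi$ is so normalized, the standard uniqueness theory for Fourier integral operators with complex phase, combined with the further normalization $\mathcal{T}_y s_j=0$, determines the Taylor expansion of each $s_j(x,y)$ at $x=y$ uniquely from $\Pi(x,y)$. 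This yields both the uniqueness of the Taylor expansion of $s_0(x,y)$ at the diagonal and the fact that $s_1(x,x)$ is a well-defined smooth function on $X$, independent of the coordinate patch or phase choice.

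To establish \eqref{e-gue241120ycdb}, I would substitute \eqref{eq:PiFIO} into the identity $\ddbar_{b,x}\Pi\equiv 0$ modulo $\cC^\infty$, obtaining transport-type relations among the Taylor coefficients of the $s_j$'s at $x=y$. Since all Taylor coefficients of $\ddbar_{b,x}\varphi$ vanish on the diagonal by hypothesis, these transport equations for $s_0$ collapse to a trivial recursion whose only free datum is the initial value $s_0(x,x)=(2\pi^{n+1})^{-1}(dV_\xi/dV)(x)=(2\pi^{n+1})^{-1}$ (using $dV=dV_\xi$). The unique solution in the gauge fixed above is therefore the constant $(2\pi^{n+1})^{-1}$ modulo infinite-order vanishing at $x=y$.

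The main task is computing $s_1(p,p)$. For this I would choose BRT-type coordinates centered at $p$ in which $\xi$, $T^{1,0}X$ and $dV$ all take canonical normal form up to high finite order, so that the Tanaka--Webster scalar curvature $R_{\mathrm{scal}}(p)$ appears as a specific Taylor coefficient of the Levi form. I would then construct an explicit phase $\varphi$ and symbol $s$ satisfying \eqref{e-gue241120yyd} and \eqref{e-gue241120ycdag} in these coordinates, plug into the next-order transport equation derived from $\ddbar_{b,x}\Pi\equiv 0$ modulo $\cC^\infty$, and solve for $s_1(p,p)$ in terms of the second-order curvature data at $p$. This should reproduce the Hsiao--Shen computation and yield $s_1(p,p)=R_{\mathrm{scal}}(p)/(4\pi^{n+1})$.

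The principal obstacle will be verifying that the weaker gauge assumptions are indeed sufficient to run this computation: one must check that no extra contribution depending on the Tanaka--Webster torsion or on the failure of $\mathcal{T}$ to be a CR vector field survives in $s_1(p,p)$, and that the BRT-type normalization can be arranged compatibly with \eqref{e-gue241120yyd} and \eqref{e-gue241120ycdag}. This requires careful bookkeeping of higher Taylor coefficients of the phase and symbol, but should ultimately follow from the uniqueness established in the first paragraph, which guarantees that $s_1(x,x)$ is a genuinely local invariant independent of auxiliary choices.
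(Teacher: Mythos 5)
First, note that the paper does not actually prove Theorem~\ref{t-gue241120yyd}: it is recalled verbatim from \cite{Hsiao_Shen_2nd_coefficient_BS_2020}, and the surrounding text (the Lemma following it and Theorem~\ref{t-gue241120yydk}) shows exactly which mechanism that reference uses. Measured against that mechanism, your first two paragraphs are sound in spirit: the gauge conditions \eqref{e-gue241120yyd}, \eqref{e-gue241120ycdag} together with the adjoint relation coming from $\Pi^*=\Pi$ do fix the Taylor jets of the $s_j$ along the diagonal, and the claim \eqref{e-gue241120ycdb} does follow from $\ddbar_{b,x}s_0$ and $\pr_{b,y}s_0$ vanishing to infinite order on the diagonal together with the known diagonal value $s_0(x,x)=(2\pi^{n+1})^{-1}$.

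The gap is in your computation of $s_1(p,p)$. The transport hierarchy obtained from $\ddbar_{b,x}\Pi\equiv 0$ (mod $\cC^\infty$) only constrains the $\ddbar_{b,x}$-derivatives of $s_1$ (and, via $\Pi^*=\Pi$, the $\pr_{b,y}$-derivatives); it leaves the diagonal value $s_1(x,x)$ entirely free, just as $\ddbar$-closedness of a reproducing kernel never determines the kernel itself. The essential ingredient you omit is the idempotency $\Pi\circ\Pi=\Pi$ (together with self-adjointness), expanded by the complex stationary phase formula with phase $F(x,\sigma)=\sigma\varphi(0,x)+\varphi(x,0)$: the second term $L^{(1)}$ of Hörmander's expansion applied to $s_0(0,x)s_0(x,0)V(x)\sigma^n$ is precisely what produces $s_1(0,0)$, and it is through $L^{(1)}$ that the fourth-order Taylor coefficients of $\varphi$ — hence, by \eqref{e-gue241220yydq}, the Tanaka--Webster scalar curvature — and the Laplacian of the volume density $V_\xi$ enter (this is the formula for $s_1(0,0)$ displayed in the proof of Theorem~\ref{t-gue241120yydk}, specialized to $f\equiv 0$). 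Without this input your scheme cannot ``solve for $s_1(p,p)$'': the next-order transport equation determines only off-diagonal derivatives of $s_1$ in terms of $s_0$, not the number $s_1(p,p)$ itself. Your paragraph on BRT-type normal coordinates is compatible with the correct argument, but it feeds into the stationary-phase computation, not into a transport equation.
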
 

We are going to generalize Theorem~\ref{t-gue241120yyd} to the case $dV_\xi\neq dV$. Fix $p\in X$. It is known that (see~\cite{Hsiao_Shen_2nd_coefficient_BS_2020}*{(3.99), Proposition 3.2}), there is an open local coordinate patch $D$ of $X$ with local coordinates $x=(x_1,\ldots,x_{2n+1})$, $x(p)=0$, $\mathcal{T}=\frac{\pr}{\pr x_{2n+1}}$ on $D$ and a phase $\varphi:D\times D\to\C$ satisfying 
\eqref{Eq:PhaseFuncMainThm} with $\lambda(x)=1+O(\abs{x}^3)$ and \eqref{eq:PiFIO} such that 
\begin{equation}\label{e-gue201226ycdb}
\begin{split}
&\mbox{$\ddbar_{b,x}(\varphi(x,y))$ vanishes to infinite order at $x=y$},
\end{split}
\end{equation}
and 
\begin{equation}\label{e-gue241220yydp}
\varphi(x,y)=x_{2n+1}-y_{2n+1}+\frac{i}{2}\sum_{j=1}^n\left[|z_j-w_j|^2+(\overline{z}_jw_j-z_j\overline{w}_j)\right]+O\left(|(x,y)|^4\right), 
\end{equation}
\begin{equation}\label{e-gue241220yydq}
\begin{split}
&\sum^n_{\ell,j=1}\frac{\pr^4\phi}{\pr z_j\pr z_\ell\pr\ol z_j\pr\ol z_\ell}(0,0)=
-\frac{i}{2}R_{{\rm scal\,}}(0),\\
&\sum^n_{\ell,j=1}\frac{\pr^4\phi}{\pr w_j\pr w_\ell\pr\ol w_j\pr\ol w_\ell}(0,0)=-\frac{i}{2}R_{{\rm scal\,}}(0),
\end{split}
\end{equation}
where $\frac{\pr}{\pr w_j}=\frac{1}{2}(\frac{\pr}{\pr y_{2j-1}}-i\frac{\pr}{\pr y_{2j}})$, $\frac{\pr}{\pr\ol w_j}=\frac{1}{2}(\frac{\pr}{\pr y_{2j-1}}+i\frac{\pr}{\pr y_{2j}})$, $j=1,\ldots,n$. 

Now, we work in the local coordinate patch $(D,x)$ and we assume that $\varphi$ satisfies \eqref{Eq:PhaseFuncMainThm} with $\lambda(x)=1+O(\abs{x}^3)$, \eqref{eq:PiFIO}, \eqref{e-gue201226ycdb}, \eqref{e-gue241220yydp} and \eqref{e-gue241220yydq}. 
Recall that $dV_\xi$ is the volume form induced by the contact form $\xi$ on $X$. We have the expression 
$$dV_\xi(x)=V_\xi(x)dx_1\cdots dx_{2n+1}=\frac{1}{n!}\left(\frac{d\xi}{2}\right)^n\wedge\xi,$$ $V_\xi(x)\in\cC^\infty(D)$. There exists a function $f\in\cC^\infty(X)$ satisfying $\mathcal{T}f\equiv 0$ on $X$ such that 
\begin{equation}\label{eqn:relation_two_volumes}
dV(x)=e^{2(n+1)f(x)}V_\xi(x) dx_1\cdots dx_{2n+1}=:V(x)dx_1\cdots dx_{2n+1}.
\end{equation}
We need 

\begin{lemma}
With the notations used above, for $s_0(x,y)$ in \eqref{eq:PiFIO}, we
can take $s_0(x,y)$ so that $s_0(x,y)$ is independent of $y_{2n+1}$, 

\begin{equation}\label{e-gue241128yyd}
s_0(x,y)=s_0(x,y'),\ \ \mbox{for all $(x,y)\in D\times D$},
\end{equation}
where $y'=(y_1,\ldots,y_{2n})$,
\begin{equation}\label{e-gue241120yydg}
s_0(0,0)=\frac{1}{2\pi^{n+1}}\frac{dV_\xi}{dV}(0)=\frac{1}{2\pi^{n+1}}(e^{-2(n+1)f})(0),
\end{equation}

\begin{equation}\label{e-gue241120yydu}
\begin{split}
&\frac{\pr s_0}{\pr\ol z_j}(0,0)=\frac{\pr s_0}{\pr w_j}(0,0)=0,\ \ j=1,\ldots,n,\\
&\frac{\pr s_0}{\pr z_j}(0,0)=\frac{1}{2\pi^{n+1}}\frac{\pr}{\pr z_j}(e^{-2(n+1)f})(0),\ \ j=1,\ldots,n,\\
&\frac{\pr s_0}{\pr\ol w_j}(0,0)=\frac{1}{2\pi^{n+1}}\frac{\pr}{\pr\ol w_j}(e^{-2(n+1)f})(0),\ \ j=1,\ldots,n,\\
&\frac{\pr^2s_0}{\pr\ol z_j\pr z_\ell}(0,0)=\frac{\pr^2s_0}{\pr\ol w_j\pr w_\ell}(0,0)=0,\ \ j, \ell=1,\ldots,n,
\end{split}
\end{equation}
and 
\begin{equation}\label{e-gue241125ycd}
(\mathcal{T}_xs_0)(0,0)=(\mathcal{T}_ys_0)(0,0)=0,
\end{equation}
where $\frac{\pr}{\pr w_j}=\frac{1}{2}(\frac{\pr}{\pr y_{2j-1}}-i\frac{\pr}{\pr y_{2j}})$, $\frac{\pr}{\pr\ol w_j}=\frac{1}{2}(\frac{\pr}{\pr y_{2j-1}}+i\frac{\pr}{\pr y_{2j}})$, $j=1,\ldots,n$, $f(x):=\frac{1}{2(n+1)}\log \frac{dV(x)}{dV_\xi(x)}\in\cC^\infty(X)$. 
\end{lemma}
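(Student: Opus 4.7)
The plan is to derive all seven listed identities from three inputs: the diagonal formula \eqref{eq:leading term s_0 intro}, applicable at $x=0$ because $\lambda(0)=1$; the equation $\ddbar_b\Pi=0$ modulo smoothing together with its counterpart $\partial_{b,y}\Pi=0$, obtained by conjugating and swapping variables via $\Pi(x,y)=\overline{\Pi(y,x)}$; and the infinite-order vanishing \eqref{e-gue201226ycdb} of $\ddbar_{b,x}\varphi$ on the diagonal. The delicate opening step is to produce a leading symbol $s_0$ independent of $y_{2n+1}$. The key observation is that \eqref{e-gue241220yydp} rewrites
\[y_{2n+1}=x_{2n+1}-\varphi(x,y)+\tilde\varphi(x,y),\]
where $\tilde\varphi(x,y)=\tfrac{i}{2}\sum_{j=1}^n[|z_j-w_j|^2+\overline{z}_jw_j-z_j\overline{w}_j]+O(|(x,y)|^4)$ has no $y_{2n+1}$-dependence to leading order. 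In any $y_{2n+1}$-dependent leading contribution $y_{2n+1}h(x,y)t^n$ of the symbol, the $-\varphi h t^n$ piece will be converted, via $\varphi e^{it\varphi}=\tfrac{1}{i}\partial_t(e^{it\varphi})$ and integration by parts in $t$, into a symbol of order $n-1$, while $x_{2n+1}h t^n$ and the leading part of $\tilde\varphi h t^n$ carry no $y_{2n+1}$. Iterating this reduction in the Taylor series in $y_{2n+1}$ absorbs all $y_{2n+1}$-dependence of $s_0$ into $s_1,s_2,\ldots$, yielding \eqref{e-gue241128yyd}; consequently $\mathcal{T}_ys_0=\partial_{y_{2n+1}}s_0\equiv 0$, giving the second equality in \eqref{e-gue241125ycd}.

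For \eqref{e-gue241120yydg}, I invoke \eqref{eq:leading term s_0 intro} at $x=0$ and substitute $dV_\xi/dV=e^{-2(n+1)f}$ from \eqref{eqn:relation_two_volumes}. For the vanishing first derivatives in \eqref{e-gue241120yydu}, I apply $\partial/\partial\overline{z}_j^x$ to \eqref{eq:PiFIO}; using $\ddbar_b\Pi=0$ modulo smoothing and the infinite-order vanishing of $\ddbar_{b,x}\varphi$ on the diagonal, the leading transport equation forces $\partial s_0/\partial\overline{z}_j(x,y)|_{x=y}\equiv 0$, hence $\partial s_0/\partial\overline{z}_j(0,0)=0$. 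Conjugating $\Pi(x,y)=\overline{\Pi(y,x)}$ and swapping variables yields $\partial_{b,y}\Pi=0$ modulo smoothing; the symmetry $\varphi(x,y)=-\overline{\varphi(y,x)}$ (visible from \eqref{e-gue241220yydp} to leading order and arrangeable to infinite order) ensures $\partial_{b,y}\varphi$ also vanishes to infinite order on the diagonal, so the analogous argument gives $\partial s_0/\partial w_j(0,0)=0$. The remaining two first-derivative formulas in \eqref{e-gue241120yydu} then follow from the chain rule applied to the diagonal expansion $s_0(x,x)=\tfrac{1}{2\pi^{n+1}}e^{-2(n+1)f(x)}+O(|x|^3)$ (valid because $\lambda(x)=1+O(|x|^3)$): from $\partial_{z_j}[s_0(x,x)]|_{x=0}=\partial s_0/\partial z_j(0,0)+\partial s_0/\partial w_j(0,0)$ and the vanishing of the second term I extract $\partial s_0/\partial z_j(0,0)$; the $\partial s_0/\partial\overline{w}_j$ formula follows symmetrically by differentiating in $\overline{z}_j$.

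For the mixed second derivatives in the last line of \eqref{e-gue241120yydu}, I differentiate the transport relation $\partial s_0/\partial\overline{z}_j^x(x,y)|_{x=y}\equiv 0$ once more in $z_\ell^x$; all correction terms still involve $\ddbar_{b,x}\varphi$ or its first $z_\ell^x$-derivative, which by \eqref{e-gue201226ycdb} continue to vanish to sufficient order on the diagonal, and evaluation at $(0,0)$ yields the first identity. The parallel $y$-side computation, differentiating the relation from $\partial_{b,y}\Pi=0$ once more in $\overline{w}_\ell^y$, yields the second. Finally, since $\mathcal{T}f\equiv 0$ and $s_0(x,x)=\tfrac{1}{2\pi^{n+1}}e^{-2(n+1)f(x)}$ holds to sufficient order near $x=0$, we have $\mathcal{T}[s_0(x,x)]=0$; combined with $\mathcal{T}_ys_0\equiv 0$ from the first step and the chain rule $\mathcal{T}[s_0(x,x)]=(\mathcal{T}_xs_0)(x,x)+(\mathcal{T}_ys_0)(x,x)$, this forces $(\mathcal{T}_xs_0)(0,0)=0$, completing \eqref{e-gue241125ycd}. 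The principal obstacle is the first step, the construction of the $y_{2n+1}$-independent leading symbol: although conceptually transparent given the normal form \eqref{e-gue241220yydp}, it will require careful iteration to ensure the residual $y_{2n+1}$-dependence lands in $S^{-\infty}$ rather than merely in $S^{n-1}$.
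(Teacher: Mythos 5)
Your overall strategy coincides with the paper's: remove the $y_{2n+1}$-dependence of $s_0$ by Taylor-expanding in powers of $\varphi$ and integrating by parts in $t$; read off \eqref{e-gue241120yydg} from \eqref{eq:leading term s_0 intro}; get the $\bar z_j$-derivatives from the transport equation coming from $\ddbar_{b,x}\Pi=0$ together with \eqref{e-gue201226ycdb}; get the $w_j$-derivatives from the conjugate representation $\Pi(x,y)=\overline{\Pi(y,x)}$; and obtain the remaining first derivatives and \eqref{e-gue241125ycd} from the chain rule on the diagonal using $s_0(x,x)=\frac{1}{2\pi^{n+1}}e^{-2(n+1)f(x)}$ up to the error allowed by $\lambda(x)=1+O(|x|^3)$. (Your closing worry about driving the residual $y_{2n+1}$-dependence into $S^{-\infty}$ is moot: the lemma only concerns $s_0$, so one round of the reduction suffices.)

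The one genuine gap is in your treatment of the $y$-side derivatives. You invoke the anti-symmetry $\varphi(x,y)=-\overline{\varphi(y,x)}$, claimed to be ``arrangeable to infinite order'', to conclude that $\partial_{b,y}\varphi$ vanishes to infinite order on the diagonal. But at this point $\varphi$ is not free: it has already been normalized to satisfy \eqref{e-gue201226ycdb}, \eqref{e-gue241220yydp} and, crucially, the fourth-order identities \eqref{e-gue241220yydq} that feed into the computation of $s_1$. Re-choosing $\varphi$ to be anti-symmetric would require verifying that these normalizations survive, which you do not do; and for the phase as given, no infinite-order vanishing of $\partial_{b,y}\varphi$ is available. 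The paper circumvents this entirely: it applies $\ddbar_{b,x}$ to $\int e^{-it\overline{\varphi(y,x)}}\overline{s(y,x,t)}\,dt$, writes the resulting eikonal relation as $\ddbar_{b,x}(-\overline{\varphi(y,x)})+g(x,y)\overline{\varphi(y,x)}$ flat at $(0,0)$ for some smooth $g$, and then uses \eqref{e-gue241220yydp} together with the already-established $\partial s_0/\partial\bar z_j(0,0)=0$ to show $g(x,y)=O(|(x,y)|^2)$ (see \eqref{e-gue241207yyds}). This quantitative control of $g$ is precisely what guarantees that the correction term $-(n+1)g\,\overline{s_0}$ in the transport equation \eqref{e-gue241207ycdo} does not contaminate the first- and second-derivative identities at the origin. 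You should either reproduce this $g$-correction argument or justify the compatibility of your anti-symmetry normalization with \eqref{e-gue241220yydq}; as written, the derivation of $\partial s_0/\partial w_j(0,0)=0$ and $\partial^2 s_0/\partial\bar w_j\partial w_\ell(0,0)=0$ rests on an unverified assumption.
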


\begin{proof}
By using integration by parts in $t$, we can take $s_0$ so that $s_0$ is independent of $y_{2n+1}$.

We assume that $s_0$ is independent of $y_{2n+1}$. Since $\varphi$ is independent of $y_{2n+1}$, $\ddbar_{b,x}\varphi(x,y)$ vanishes to infinite order at $x=y$. From this observation and $s_0$ is independent of $y_{2n+1}$, we conclude that 

\begin{equation}\label{e-gue241207yydp}
\mbox{$\ddbar_{b,x}s_0(x,y')$ vanishes to infinite order at $x=y$}.
\end{equation}
From \eqref{e-gue241207yydp}, we get 
\begin{equation}\label{e-gue241207yydq}
\frac{\pr s_0}{\pr\ol z_j}(0,0)=0,\ \ j=1,\ldots,n.
\end{equation} 

From $\ddbar_{b,x}\int e^{-it\ol\varphi(y,x)}\ol s(y,x,t)dt\equiv0$, we get 
\begin{equation}\label{e-gue241207yydr}
\mbox{$\ddbar_{b,x}(-\ol\varphi(y,x))+g(x,y)\ol\varphi(y,x)$ vanishes to infinite order at $(0,0)$},
\end{equation}
where $s(x,y,t)$ is as in \eqref{eq:PiFIO}  and $g$ is a smooth function defined near $(0,0)$. From \eqref{e-gue241220yydp} and \eqref{e-gue241207yydq}, we can check that 
\begin{equation}\label{e-gue241207yyds}
g(x,y)=O(\abs{(x,y)}^2).
\end{equation}

By using integration by parts in $t$, we get 

\begin{equation}\label{e-gue241207ycdk}
\begin{split}
&\ddbar_{b,x}\int e^{-it\ol\varphi(y,x)}\ol s(y,x,t)dt\\
&=\int e^{-it\ol\varphi(y,x)}(-(n+1)g(x,y)\ol s_0(y,x)+\ddbar_{b,x}\ol s_0(y,x))t^n+O(t^{n-1})dt.
\end{split}
\end{equation}
From \eqref{e-gue241207ycdk}, we see that 

\begin{equation}\label{e-gue241207ycdo}
\mbox{$(-(n+1)g(x,y)\ol s_0(y,x)+\ddbar_{b,x}\ol s_0(y,x))-h(x,y)\ol\varphi(y,x)$ vanishes to infinite order at $(0,0)$}, 
\end{equation}
where $h$ is a smooth function defined near $(0,0)$. From \eqref{e-gue241207yyds} and \eqref{e-gue241207ycdo}, it is not difficult to see that 
\begin{equation}\label{e-gue241207ycdg}
\frac{\pr s_0}{\pr w_j}(0,0)=\frac{\pr^2 s_0}{\pr w_j\pr\ol w_j}=0,\ \ j=1,\ldots,n.
\end{equation}

From \eqref{e-gue241207yydq} and \eqref{e-gue241207ycdg}, we get the first equation in \eqref{e-gue241120yydu}. 

From \eqref{e-gue241207yydq}, \eqref{e-gue241207ycdg} and notice that $s_0(x,x')=\frac{1}{2\pi^{n+1}}(e^{-2(n+1)f})(x)$, we get the second and third equation in \eqref{e-gue241120yydu}.

From \eqref{e-gue241207yydp} and \eqref{e-gue241207ycdg}, we get the last equation in  \eqref{e-gue241120yydu}.

Finally, from $s_0(x,x')=\frac{1}{2\pi^{n+1}}(e^{-2(n+1)f})(x)$ and $f$ is $\mathcal{T}$-invariant, we get \eqref{e-gue241125ycd}.
\end{proof}

We introduce an one-form $\widehat{\xi}=e^{2f}\xi.$ It is easy to check that $\widehat{\xi}$ is a contact form and the volume form $dV$ is induced by $\widehat{\xi},$ that is, $$dV=\frac{1}{n!}\left(\frac{d\widehat{\xi}}{2}\right)^n\wedge\widehat{\xi}.$$

With this volume, we have the following theorem:

\begin{theorem}\label{t-gue241120yydk}
    With the same notation above, the second coefficient of the Szeg\H{o} kernel is $$s_1(0,0)=\frac{1}{2\pi^{n+1}}e^{-2(n+1)f(0)}\left(\frac{1}{2}R_{{\rm scal\,}}(0)+(n+1)\Delta_b f(0)\right),$$
    where $R_{{\rm scal\,}}$ is the Tanaka-Webster scalar curvature with respect to the contact form $\xi$ (see \eqref{e-gue241122yydI}) and $\Delta_b$ denotes the CR sublaplacian operator (see \eqref{e-gue241122yyd}). 
\end{theorem}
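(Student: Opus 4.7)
The plan is to reduce to Theorem~\ref{t-gue241120yyd} by exploiting the conformal change of contact form $\widehat\xi:=e^{2f}\xi$. By construction $dV_{\widehat\xi}=dV$, so the triple $(X,T^{1,0}X,\widehat\xi)$ with volume $dV$ satisfies the hypotheses of Theorem~\ref{t-gue241120yyd}. Since the Szeg\H{o} projector $\Pi$ is determined by the CR structure and the $L^2$ inner product induced by $dV$ alone, the kernel $\Pi(x,y)$ is unchanged; only the FIO description (phase and amplitude) changes.

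First I would choose a second phase $\widehat\varphi:D\times D\to\C$ normalized with respect to $\widehat\xi$, so that $d_x\widehat\varphi(x,x)=\widehat\xi(x)=e^{2f(x)}\xi(x)$, satisfying the analogue of \eqref{e-gue201226ycdb}, together with an amplitude $\widehat s(x,y,t)\sim\sum_j\widehat s_j(x,y)t^{n-j}$ satisfying the analogue of \eqref{e-gue241120ycdag} for the Reeb field $\widehat{\mathcal T}$ of $\widehat\xi$. Theorem~\ref{t-gue241120yyd} then yields
\[\widehat s_0(0,0)=\frac{1}{2\pi^{n+1}},\qquad \widehat s_1(0,0)=\frac{1}{4\pi^{n+1}}\widehat R_{\mathrm{scal}}(0),\]
where $\widehat R_{\mathrm{scal}}$ is the Tanaka--Webster scalar curvature of $\widehat\xi$. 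Second, I would compare the two FIO representations of $\Pi(x,y)$: writing $\widehat\varphi=\alpha\varphi$ modulo phase-flat remainders, with $\alpha(x,x)=e^{2f(x)}$, and performing the change of fibre variable $t\mapsto t/\alpha$ gives on the diagonal the leading relation $\widehat s_j=\alpha^{n+1-j}s_j$ together with subleading corrections from the derivatives of $\alpha$ and from the amplitude transport. Finally, I would apply the conformal transformation formula for the Tanaka--Webster scalar curvature under $\widehat\xi=e^{2f}\xi$, which has the schematic form $\widehat R_{\mathrm{scal}}=e^{-2f}(R_{\mathrm{scal}}+2(n+1)\Delta_b f+\text{terms quadratic in }d_bf)$, and combine the three ingredients to read off $s_1(0,0)$.

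The main obstacle I expect lies in the careful bookkeeping of the second step. The naive relation $\widehat s_1(0,0)=e^{2nf(0)}s_1(0,0)$ alone would give an incorrect power of $e^{-2f(0)}$ and a spurious $|d_bf|^2$ contribution; the correct formula requires including the subleading corrections produced by the second horizontal derivatives of $\alpha=e^{2f}$, and showing that the $|d_bf|^2$ pieces cancel precisely against the matching terms coming from the conformal transformation of $\widehat R_{\mathrm{scal}}$. A cleaner alternative that avoids this conformal bookkeeping is to mimic the argument of \cite{Hsiao_Shen_2nd_coefficient_BS_2020} directly in the present setting: using $\Pi^2=\Pi$ together with $\Pi^*=\Pi$ with respect to $(\,\cdot\,|\,\cdot\,)_{dV}=e^{2(n+1)f}(\,\cdot\,|\,\cdot\,)_{dV_\xi}$, one derives transport equations for $s_0,s_1,\ldots$ in which the extra factor $e^{2(n+1)f}$ enters explicitly. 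The Lemma above already carries out this strategy at the level of $s_0$ and its first derivatives at $(0,0)$; pushing one order further, using the normalizations \eqref{e-gue241220yydp}--\eqref{e-gue241220yydq} of $\varphi$ at the origin, should produce the additional $(n+1)\Delta_b f(0)$ contribution directly and yield the stated formula for $s_1(0,0)$.
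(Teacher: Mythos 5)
Your ``cleaner alternative'' at the end is essentially the paper's proof, and it is the right one to pursue. The paper writes $\Pi=\Pi\circ\Pi$ as a composition of the two Fourier integral operators with the density $V=e^{2(n+1)f}V_\xi$ appearing in the intermediate integral, applies H\"ormander's stationary phase, and obtains
$s_1(0,0)=-L^{(1)}_{(x,\sigma)}\bigl(s_0(0,x)s_0(x,0)e^{2(n+1)f(x)}\sigma^n\bigr)\big|_{(x,\sigma)=(0,1)}$;
the data on $s_0$ from the Lemma (\eqref{e-gue241120yydg}, \eqref{e-gue241120yydu}, \eqref{e-gue241125ycd}) kill all but three contributions, namely $2(n+1)\sum_i\partial^2 f/\partial z_i\partial\bar z_i(0)=-(n+1)\Delta_bf(0)$, $\sum_i\partial^2V_\xi/\partial z_i\partial\bar z_i(0)=-R_{\mathrm{scal}}(0)$ (quoted from \cite{Hsiao_Shen_2nd_coefficient_BS_2020}), and the fourth-order phase derivative \eqref{e-gue241220yydq} giving $+\tfrac12R_{\mathrm{scal}}(0)$, which combine to the stated formula. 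Your primary (conformal) route via $\widehat\xi=e^{2f}\xi$ is genuinely different and not unreasonable --- $dV_{\widehat\xi}=dV$ is automatically $\widehat{\mathcal T}$-invariant and the Szeg\H{o} projector is unchanged, so Theorem~\ref{t-gue241120yyd} does apply to the hatted data --- but the difficulty you flag is real and is in fact twofold: besides the conformal transformation law for $\widehat R_{\mathrm{scal}}$ with its $|d_bf|^2$ terms, the diagonal value of $s_1$ is only well defined after imposing the normalization $\mathcal T_ys=0$, whereas the hatted amplitude satisfies $\widehat{\mathcal T}_y\widehat s=0$ with $\widehat{\mathcal T}\neq\mathcal T$; restoring the normalization after the substitution $t\mapsto t/\alpha$ produces further first- and second-derivative corrections in $f$ that must also cancel. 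The direct composition argument sidesteps all of this bookkeeping, which is why the paper (and your fallback) is the cleaner path.
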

\begin{proof}
    By replacing the volume $dV_\xi$ with $dV$ in the proof of \cite{Hsiao_Shen_2nd_coefficient_BS_2020}*{Section 3.3}, we have 
    \begin{equation}\label{eqn:Interproof 01 szego conformal}
         s_1(0,0)=-L^{(1)}_{(x,\sigma)}(s_0(0,x)s_0(x,0)e^{2(n+1)f(x)}\sigma^n)|_{(x,\sigma)=(0,1)}
    \end{equation}
    where $L^{(1)}$ is the partial differential operator in the stationary phase formula of H\"ormander. In present case, 
    \begin{equation}\label{e-gue241207ycdm}
    \begin{split}
         & L^{(1)}_{(x,\sigma)}(s_0(0,x)s_0(x,0)e^{2(n+1)f(x)}\sigma^n)\\
         &=\sum^{2}_{\mu=0}\frac{i^{-1}}{\mu!(\mu+1)!}\left(\sum^{n}_{j=1}i\frac{\partial^2}{\partial z_j\partial\overline{z_j}}+\frac{\partial^2}{\partial x_{2n+1}\partial\sigma}\right)^{\mu+1}(G^{\mu}s_0(0,x)s_0(x,0)e^{2(n+1)f(x)}\sigma^n),
         \end{split}
    \end{equation}
where $$G(x,\sigma)=F(x,\sigma)-F(0,1)-\frac{1}{2}\Big\langle \operatorname{Hess}(F)(p,1)\begin{pmatrix}
        x\\ \sigma-1
    \end{pmatrix},\begin{pmatrix}
        x\\ \sigma-1
    \end{pmatrix}\Big\rangle,$$ and $$F(x,\sigma)=\sigma\varphi(0,x)+\varphi(x,0).$$

    From \eqref{e-gue241120yydg} and \eqref{e-gue241120yydu}, we have 
   \begin{equation}\label{e-gue241121yyd}
       s_1(0,0)=\frac{-1}{2\pi^{n+1}}e^{-2(n+1)f(0)}\left(2(n+1)\sum_{i=1}^{n}\frac{\partial^2 f}{\partial z_i\partial\overline{z}_i}(0)V_{\xi}(0)+\sum_{i=1}^{n}\frac{\partial^2 V_\xi}{\partial z_i\partial\overline{z}_i}(0)+i\sum_{j,k=1}^n\frac{\partial^4\varphi}{\partial z_j\partial\overline{z}_j\partial z_k\partial\overline{z}_k}(0)\right).
       \end{equation}
       From \eqref{e-gue241220yydq}, we see that 
       \begin{equation}\label{e-gue241121yydIz}
i\sum_{j,k=1}^n\frac{\partial^4\varphi}{\partial z_j\partial\overline{z}_j\partial z_k\partial\overline{z}_k}(0)=\frac{1}{2}R_{{\rm scal\,}}(0).
       \end{equation}
Moreover, from~\cite{Hsiao_Shen_2nd_coefficient_BS_2020}*{(3.101)}, we see that 
 \begin{equation}\label{e-gue241121yydI}
 \sum_{i=1}^{n}\frac{\partial^2 V_\xi}{\partial z_i\partial\overline{z}_i}(0)=-R_{{\rm scal\,}}(0).
       \end{equation}

By definition of $\Delta_b$, it is straightforward to check that 
 \begin{equation}\label{e-gue241121yydII}
 \sum_{i=1}^{n}\frac{\partial^2 f}{\partial z_i\partial\overline{z}_i}(0)=-\frac{1}{2}(\Delta_bf)(0).
 \end{equation}

From \eqref{e-gue241121yyd}, \eqref{e-gue241121yydIz}, \eqref{e-gue241121yydI} and \eqref{e-gue241121yydII}, we get 
\begin{equation}\label{e-gue241121yydIII}
       s_1(0,0)=\frac{-1}{2\pi^{n+1}}e^{-2(n+1)f(0)}\left(-(n+1)\Delta_b f(0)-\frac{1}{2}R_{scal}\right).
   \end{equation}
   The theorem follows. 
\end{proof}

From Theorem~\ref{t-gue241120yydk}, we generalize the result in~\cite{Hsiao_Shen_2nd_coefficient_BS_2020} to any Reeb-invariant volume form. 

\begin{theorem}\label{t-gue241122yyd} 
With the notations and assumptions in Theorem~\ref{thm:ExpansionMain}, let $(D,x)$ be any coordinate patch and let 
$\varphi:D\times D\to\C$ be any phase function satisfying 
\eqref{Eq:PhaseFuncMainThm} with $\lambda(x)\equiv1$ and \eqref{eq:PiFIO}. Suppose that 
\begin{equation}\label{e-gue241120yydz}
\mbox{$\ddbar_{b,x}\varphi(x,y)$ vainishes to infinite order on $x=y$}.
\end{equation}
Suppose further that
\begin{equation}\label{e-gue241120ycdazz}
\mathcal{T}_ys(x,y,t)=0, \ \ \mathcal{T}_ys_j(x,y)=0,\ \ j=0,1,\ldots,
\end{equation}
where $s(x,y,t)$, $s_j(x,y)$, $j=0,1,\ldots$, are as in \eqref{eq:PiFIO}. Then, $s_1(x,x)$ is well-defined as a smooth function on $X$ and we have 
\begin{equation}\label{e-gue201221yydbz}
\mbox{$s_1(x,x)=\frac{-1}{2\pi^{n+1}}e^{-2(n+1)f(x)}\left(-(n+1)\Delta_b f(x)-\frac{1}{2}R_{scal}(x)\right)$,  for every $x\in D$}, 
\end{equation}
where $f(x):=\frac{1}{2(n+1)}\log \frac{dV(x)}{dV_\xi(x)}\in\cC^\infty(X)$, $R_{{\rm scal\,}}$ is the Tanaka--Webster scalar curvature on $X$ (see \eqref{e-gue241122yydI}) and $\Delta_b:\cC^{\infty}(X)\rightarrow\cC^{\infty}(X)$ denotes the CR sublaplacian operator (see \eqref{e-gue241122yyd}). 
\end{theorem}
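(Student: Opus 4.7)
The strategy is to reduce to Theorem~\ref{t-gue241120yydk} via pointwise localization and a phase-invariance argument. Fix an arbitrary point $p \in D$; the task is to verify \eqref{e-gue201221yydbz} at $x=p$. By the construction recalled from~\cite[(3.99), Proposition 3.2]{Hsiao_Shen_2nd_coefficient_BS_2020}, there exist local coordinates $\tilde x$ centered at $p$ with $\mathcal{T}=\partial/\partial \tilde x_{2n+1}$ and a phase $\tilde\varphi$ satisfying \eqref{Eq:PhaseFuncMainThm}, \eqref{e-gue201226ycdb}, \eqref{e-gue241220yydp} and \eqref{e-gue241220yydq}. By integration by parts in $t$ (as in \eqref{e-gue241128yyd}) together with Reeb averaging of the associated symbol $\tilde s$, one may additionally arrange that the $\tilde s_j$ obey the Reeb-invariance \eqref{e-gue241120ycdazz}. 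Theorem~\ref{t-gue241120yydk} then delivers the desired value of $\tilde s_1(0,0)=\tilde s_1(p,p)$ in the form prescribed by \eqref{e-gue201221yydbz}.

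The main step is to show that this diagonal value does not depend on the phase used, as long as one stays within the class of phases with $\lambda\equiv 1$ satisfying \eqref{e-gue241120yydz} whose symbols satisfy \eqref{e-gue241120ycdazz}. Given any two such phases $\varphi, \tilde\varphi$, both represent the same intrinsic Szeg\H{o} projector $\Pi$ and share the diagonal differential $d_x\varphi(x,x)=d_x\tilde\varphi(x,x)=\xi(x)$. Applying Kuranishi's trick to $e^{it(\varphi-\tilde\varphi)}$ inside the FIO representation \eqref{eq:PiFIO} and expanding by stationary phase in $t$, one can convert the symbol associated to $\tilde\varphi$ into a symbol associated to $\varphi$ modulo $S^{-\infty}$. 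The uniqueness statement for Szeg\H{o} kernel symbols, together with the $\bar\partial_{b,x}$-vanishing of each phase and the Reeb-invariance of each symbol, eliminates the remaining gauge freedom and forces $s_0(x,x)=\tilde s_0(x,x)$ and $s_1(x,x)=\tilde s_1(x,x)$ on the diagonal. In particular $s_1(p,p)=\tilde s_1(p,p)$, and combined with Theorem~\ref{t-gue241120yydk} this yields \eqref{e-gue201221yydbz} at $p$. Since $p\in D$ was arbitrary and the right-hand side of \eqref{e-gue201221yydbz} involves only the intrinsic smooth data $f$, $R_{\mathrm{scal}}$, $\Delta_b f$, the identity holds pointwise on $D$ and defines $s_1(x,x)$ as a globally well-defined smooth function on $X$.

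The main obstacle is the phase-invariance step. It is the direct generalization of the corresponding argument carried out in the proof of Theorem~\ref{t-gue241120yyd} in~\cite{Hsiao_Shen_2nd_coefficient_BS_2020} under the assumption $dV=dV_\xi$; the only new ingredient is the presence of the weight $e^{2(n+1)f}$ in the relation \eqref{eqn:relation_two_volumes} between $dV$ and $dV_\xi$. Because $\mathcal{T}f\equiv 0$, this weight is constant along the Reeb flow and does not interfere with the integration by parts in $t$ used to normalize the sub-leading symbol (compare the $y_{2n+1}$-independence exploited in the proof producing \eqref{e-gue241128yyd}). Once phase invariance is in place, the scalar-curvature contribution to $s_1(p,p)$ enters as in Theorem~\ref{t-gue241120yydk} through \eqref{e-gue241220yydq} and \cite[(3.101)]{Hsiao_Shen_2nd_coefficient_BS_2020}, while the $\Delta_b f$ contribution arises from the mixed holomorphic--antiholomorphic derivatives of $e^{-2(n+1)f}$ at $p$ computed in \eqref{e-gue241120yydu}.
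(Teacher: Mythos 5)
Your proposal is correct and follows essentially the same route as the paper: Theorem~\ref{t-gue241122yyd} is obtained there directly from the pointwise computation of Theorem~\ref{t-gue241120yydk} at a fixed $p$ in the normal-form coordinates of \cite[(3.99), Proposition 3.2]{Hsiao_Shen_2nd_coefficient_BS_2020}, combined with the phase-independence of the diagonal value of $s_1$ already established in \cite{Hsiao_Shen_2nd_coefficient_BS_2020} (cf.\ the well-definedness claim in Theorem~\ref{t-gue241120yyd}). Your Kuranishi-trick/stationary-phase justification of that independence is the same mechanism used in the cited reference, so no new gap is introduced.
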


\begin{remark}\label{rmk:CoefficientsOnComplexManifolds}
 Let \((L,h)\to M\) be a positive holomorphic line bundle with Hermitian metric \(h\) over a compact complex manifold \(M\) of \(\dim_\C M=n\). Let $R^L$ denote the curvature two form of $L$. Fix any Hermitian metric $\langle\,\cdot\,|\,\cdot\,\rangle_M$ on $\mathbb CTM$ and let $\Theta$ be the real two form on $M$ induced by $\langle\,\cdot\,|\,\cdot\,\rangle_M$. 
    Le $dV_{\Theta}$ be the volume form on $M$ induced by $\Theta$. For every $m\in N$, let \((L^m,h^m)\to M\) be the $m$-th power of $(L,h)$, where $h^m$ is the Hermitian metric on $L^m$ induced by $h$. Let $(\,\cdot\,|\,\cdot\,)_m$ be the $L^2$ inner product on $L^2(M,L^m)$ induced by $dV_{\Theta}$ and $h^m$. Let $H^0(M,L^m)$ be the space of global holomorphic sections with values in $L^m$. Let $\set{f_1,\ldots,f_{d_m}}$ be an orthonormal basis of $H^0(M,L^m)$ with respect to $(\,\cdot\,|\,\cdot\,)_m$. The Bergman kernel function is given by 
    \begin{equation}\label{e-gue241214yyd}
P_m(x):=\sum^{d_m}_{j=1}\abs{f_j(x)}^2_{h^m}\in\cC^\infty(M).
    \end{equation}
    It is well-known that (see~\cite{Catlin_BKE_1999},~\cite{HM14},~\cite{Ma_Marinescu_HMI_BKE_2007},~\cite{Zelditch_BKE_1998})
    \begin{equation}\label{e-gue241214yydI}
    \begin{split}
&P_m(x)\sim\sum^{+\infty}_{j=0}b_j(x)m^{n-j}\ \ \mbox{in $S^n_{{\rm loc\,}}(1;M)$},\\
&b_j(x)\in\cC^\infty(M),\ \ j=0,1,\ldots.
\end{split}
    \end{equation}
   Let  us consider the associated circle bundle \(\pi\colon X\to M\) as in Example~\ref{Ex:RelationToBKExpansion}. Take $\mathcal{T}\in\cC^\infty(X,TX)$ be the vector field on $X$ induced by the $S^1$ action acting on the fiber of $L^*$ and let $\xi$ be the global one form on $X$ so that $\mathcal{T}\lrcorner\xi\equiv1$ and $u\lrcorner\xi=0$, for every $u\in T^{1,0}X\oplus T^{0,1}X$. Take 
   \begin{equation}\label{e-gue241214yydIIIw}
   dV:=\frac{1}{n!}\xi\wedge\Theta^n
   \end{equation}
   and let $\Pi$ be the Szeg\H{o} projection associated to $dV$. 
   Let $s_1$ be as in Theorem~\ref{t-gue241122yyd}. We have 
   \begin{equation}\label{e-gue241214yydII}
   s_1(x,x)=\frac{1}{2\pi}b_1(\pi(x)),\ \ \mbox{for all $x\in X$},
   \end{equation}
   where $\pi: X\To M$ is the natural projection. From Theorem~\ref{t-gue241122yyd} and \eqref{e-gue241214yydII}, we get 
   \begin{equation}\label{e-gue241214ycdb}
\mbox{$b_1(\pi(x))=\frac{-(2\pi)}{2\pi^{n+1}}e^{-2(n+1)f(x)}\left(-(n+1)\Delta_b f(x)-\frac{1}{2}R_{scal}(x)\right)$, for every $x\in X$}.
   \end{equation}

We now rewrite \eqref{e-gue241214ycdb} in terms of geometric invariants on $M$. 
We can check that 
\begin{equation}\label{e-gue241214yydIIIzz}
   d\xi(x)=\sqrt{-1}R^L(\pi(x))
   \end{equation}
   and 
   \begin{equation}\label{e-gue241214yydIIIz}
dV_\xi(x)=\Bigr(\sqrt{-1}R^L(\pi(x))\Bigr)^n\frac{2^{- n}}{n!}.
   \end{equation}
   For every $x\in M$, let 
   \[\dot{R}^L(x): T^{1,0}_xM\To T^{1,0}_xM\]
   be the linear map given by  $\langle\,\dot{R}^L(x)U\,|\,V\,\rangle_M=\langle\,R^L(x)\,,\,U\wedge\ol V\,\rangle$, $U, V\in T^{1,0}_xM$.  For every $x\in M$, let ${\rm det\,}\dot{R}^L(x)=\lambda_1(x)\cdots\lambda_n(x)$, where $\lambda_j(x)$, $j=1,\ldots,n$, be the eigenvalues of $\dot{R}^L(x)$.  We can check that 
    \begin{equation}\label{e-gue241214yydIII}
   {\rm det\,}\dot{R}^L(\pi(x))=2^n\frac{dV_\xi(x)}{dV(x)},\ \ \mbox{for every $x\in X$},
   \end{equation}
and
 \begin{eqnarray}\label{e-gue241214ycd}
   f(x)&=&\frac{1}{2(n+1)}\log \frac{dV(x)}{dV_{\xi}(x)} \nonumber \\
   &=&\frac{1}{2(n+1)}\log\Bigr(\bigr( {\rm det\,}\dot{R}^L(\pi(x))\bigr)^{-1}2^{n}\Bigr),\ \ \mbox{for every $x\in X$}.
   \end{eqnarray} 

   Let $\omega:=\frac{\sqrt{-1}}{2\pi}R^L$ be the K\"ahler form on \(M\). 
The K\"ahler form $\omega$ induces a Hermitian metric $\langle\,\cdot\,|\,\cdot\,\rangle_{\omega}$ on $\mathbb CTM$ and also a Hermitian metric $\langle\,\cdot\,|\,\cdot\,\rangle_{\omega}$ on $T^{*0,q}M$ the bundle of $(0,q)$ forms on $M$, for every $q=0,1,\ldots,n$. Let $(\,\cdot\,|\,\cdot\,)_{\omega}$ be the $L^2$ inner product on $\oplus^n_{q=0}\Omega^{0,q}(M)$ induced by $\langle\,\cdot\,|\,\cdot\,\rangle_{\omega}$, where $\Omega^{0,q}(M):=\cC^\infty(M,T^{*o,q}M)$. The complex Laplacian with respect to $\omega$ is the operator $\Delta_\omega:\cC^{\infty}(M)\rightarrow\cC^{\infty}(M)$ given by 
\begin{equation}\label{e-gue241122yydz}
(\,\Delta_\omega f\,|\,u\,)_\omega=(\,df\,|\,du\,)_{\omega},\quad f\in\cC^\infty(M), u\in\cC^{\infty}(M).\end{equation}
Let $g\in\cC^\infty(M)$ and let $\hat g\in\cC^\infty(X)$ given by $\widehat g(x):=g(\pi(x))$, for all $x\in X$. We can check that
\begin{equation}\label{e-gue241214ycdqq}
\Delta_b\widehat{g}(x)=\frac{1}{2\pi}(\Delta_\omega g)(\pi(x)),\ \ \mbox{for all $x\in X$}.
\end{equation}

Let $r$ be the scalar curvature on $M$ with respect to $\omega=\frac{\sqrt{-1}}{2\pi}R^L$ (see~\cite{Hsiao_BT_coefficient_2012}*{(1.8)}). 
It was shown in~\cite{Herrmann_Hsiao_Li_Q-R-Sasakian_Szego_2018}*{Theorem 3.5} that \begin{equation}\label{e-gue241214ycdq}
r(\pi(x))=4\pi R_{\rm{scal}}(x),\ \ \mbox{for every $x\in X$}.
\end{equation}

From \eqref{e-gue241214ycdb}, \eqref{e-gue241214ycd}, \eqref{e-gue241214ycdqq} and \eqref{e-gue241214ycdq}, we get 

\begin{equation}\label{e-gue241214ycdl}
b_1(\pi(x))=(2\pi)^{-n}{\rm det\,}\dot{R}^L(\pi(x))\Bigr(\frac{1}{8\pi}r(\pi(x))
+(n+1)(\Delta_bf)(x)\Bigr).
\end{equation}
From \eqref{e-gue241214ycd} and \eqref{e-gue241214ycdqq}, we can check that 
\begin{equation}\label{e-gue241214ycdj}
(n+1)(\Delta_bf)(x)=\frac{1}{4\pi}\Bigr(\hat r(\pi(x))-r(\pi(x)\Bigr),
\end{equation}
for every $x\in X$, where $\hat r$ is given by~\cite{Hsiao_BT_coefficient_2012}*{(1.8)}. From \eqref{e-gue241214ycdl} and \eqref{e-gue241214ycdj}, we get 
\begin{equation}\label{e-gue241214ycdk}
b_1(x)=(2\pi)^{-n}{\rm det\,}\dot{R}^L(x)\Bigr(\frac{1}{4\pi}\hat r(x)-\frac{1}{8\pi}r(x)\Bigr),
\end{equation}
for every $x\in M$. The formula \eqref{e-gue241214ycdk} coincides with the result in~\cite{Hsiao_BT_coefficient_2012}*{Theorem 1.4}. 

The computation of the coefficients of Bergman kernel in more general setting can be found in \cite{Ma_Marinescu_JRAM_2012}. We also refer the reader to~\cite{Herrmann_Hsiao_Li_Q-R-Sasakian_Szego_2018} for the computation of the coefficients  \(b_0\), \(b_1\) and \(b_2\) for quasi-regular Sasakian manifolds.
\end{remark}

We need the following partial refinement of the main result obtained in~\cite{HHMS23}. The refinement of the main result obtained in~\cite{HHMS23} is basically given by equation \eqref{eqn:ThmI1InProof} below.   

\begin{theorem}\label{t-gue241123yyd}
	Let $(X,T^{1,0}X)$ be a compact orientable strictly pseudoconvex embeddable CR manifold of dimension $2n+1$, \(n\geq 1\) with volume form \(dV\). Fix a global one form $\xi\in\cC^\infty(X,T^*X)$ such that for any \(x\in X\) we have $\xi(x)\neq0$,  $\ker\xi(x)=\operatorname{Re}T_x^{1,0}X$ and the respective Levi form $\mathcal{L}_x$ is positive definite. Let $(D,x)$ be any coordinate patch and let 
	$\varphi:D\times D\to\C$ be any phase function satisfying \eqref{Eq:PhaseFuncMainThm} and \eqref{eq:PiFIO}. For any formally self-adjoint first order classical pseudodifferential operator \(P\) on \(X\) whose principle symbol  at \(\xi\) is positive there exist smooth functions
	\begin{eqnarray*}
		a_{j,s}(x,y)\in\cC^\infty(D\times D),\ \ j,s=0,1,\ldots,
	\end{eqnarray*}
	such that for any \(\chi\in \mathscr{C}^\infty_c(\R_+)\) putting \(\chi_k(t):=\chi(k^{-1}t)\), \(k,t\in\R_+\), one has for the distributional kernel $\chi_k(A)(x,y)$ of $\chi_k(A)$ where \(A:=\Pi P \Pi\colon \operatorname{Dom}(A)\to L^2(X)  \) that
	\begin{equation*}
		\chi_k(A)(x,y)=\int_0^{+\infty} 
		e^{ikt\varphi(x,y)}b^\chi(x,y,t,k)dt+O\left(k^{-\infty}\right)~\text{on}~D\times D,
	\end{equation*}
	 with $b^\chi(x,y,t,k)\in S^{n+1}_{\mathrm{loc}}
	 (1;D\times D\times{\R}_+)$,
	 \begin{equation*}
	 	\begin{split}
	 		&b^\chi(x,y,t,k)\sim\sum_{j=0}^{+\infty}b^\chi_{j}(x,y,t)k^{n+1-j}~
	 		\text{in $S^{n+1}_{\mathrm{loc}}(1;D\times D\times{\R}_+)$},\\
	 		&b^\chi_j(x,y,t)\in\mathscr{C}^\infty(D\times D\times{\R}_+),~j=0,1,2,\ldots,
	 	\end{split}
	 \end{equation*}
	 so that 
	 \begin{equation}\label{eqn:ThmI1InProof}
	 	b^\chi_j(x,y,t)=\sum^{+\infty}_{s=0}a_{j,s}(x,y)\chi^{(s)}(t)t^{n+s-j},\quad j=0,1,\ldots,
	 \end{equation}
	 where \(\chi^{(s)}:=(\partial/\partial t)^s\chi\) and the infinite sum in \eqref{eqn:ThmI1InProof} converges uniformly in $\cC^{\infty}(K\times K\times I)$ topology, for any compact subsets $K\subset D$, $I\subset\mathbb R$, for all $j=0,1\dots$.  
\end{theorem} 

\begin{proof}
From~\cite{HHMS23}*{Lemma 4.2, Lemma 4.3, Theorem 4.11}, we see that there is a phase $\varphi_1(x,y)\in\cC^\infty(D\times D)$  satisfying 
\eqref{Eq:PhaseFuncMainThm} and \eqref{eq:PiFIO} such that 
\begin{equation}\label{e-gue241124ycd}
\chi_k(A)(x,y)=\int_0^{+\infty} 
e^{ikt\varphi_1(x,y)}\hat b^\chi(x,y,t,k)dt+O\left(k^{-\infty}\right)~\text{on}~D\times D,
\end{equation}
where $\hat b^\chi(x,y,t,k)\in S^{n+1}_{\mathrm{loc}}
(1;D\times D\times{\R}_+)$,
\begin{equation}\label{e-gue241124ycdI}
\begin{split}
&\hat b^\chi(x,y,t,k)\sim\sum_{j=0}^{+\infty}\hat b^\chi_{j}(x,y,t)k^{n+1-j}~
\text{in $S^{n+1}_{\mathrm{loc}}(1;D\times D\times{\R}_+)$},\\
&\hat b^\chi_j(x,y,t)\in\mathscr{C}^\infty(D\times D\times{\R}_+),~j=0,1,2,\ldots,
\end{split}
\end{equation}
and for some bounded open interval $I^\chi\Subset\R_+$,
\begin{equation}\label{e-gue241115ycdaz}
\begin{split}
{\rm supp\,}_t\hat b^\chi(x,y,t,k),~{\rm supp\,}_t \hat b^\chi_j(x,y,t)\subset I^\chi,\ \ j=0,1,2,\ldots~.
\end{split}
\end{equation}
and 
\begin{equation}\label{e-gue241124yyd}
\begin{split}
&\hat b^\chi_j(x,y,t)=\sum^{N_j}_{s=0}\hat a_{j,s}(x,y)\chi^{(s)}(t)t^{n+s-j},\ \ j=0,1,\ldots,\\
&\hat a_{j,s}(x,y)\in\cC^\infty(D\times D),\ \ s=0,1,\ldots,N_j,\ \ j=0,1,\ldots,
\end{split}
\end{equation}
where $N_j\in\mathbb N$, $j=0,1,\ldots$ and \(\hat a_{j,s}(x,y)\), \( s=0,1,\ldots,N_j,\,  j=0,1,\ldots\), are independent of \(\chi\). 

Let $\varphi\in\cC^\infty(D\times D)$
 be any phase function satisfying 
\eqref{Eq:PhaseFuncMainThm} and \eqref{eq:PiFIO}. From~\cite{Hsiao_Marinescu_Szego_lower_energy_2017}*{Theorem 5.4}, we see that 
\begin{equation}\label{e-gue241124yydI}
\varphi(x,y)=g(x,y)\varphi_1(x,y)+f(x,y),
\end{equation}
 where $g(x,y)$, $f(x,y)\in\cC^\infty(D\times D)$, $f(x,y)$ vanishes to infinite order at $x=y$. Fix $p\in D$ and take local coordinates $x=(x_1,\ldots,x_{2n+1})$ defined on $D$ so that $x(p)=0$ and $\mathcal{T}=\frac{\pr}{\pr x_{2n+1}}$ where \(\mathcal{T}\) is the Reeb vector field with respect to \(\xi\). In order to verify \eqref{eqn:ThmI1InProof} it is enough to consider the statement for this choice of coordinates.
  From Malgrange preparation theorem, near $(0,0)$, we have 
 \begin{equation}\label{e-gue241124yydII}
\begin{split}
&\varphi(x,y)=\alpha(x,y)(-y_{2n+1}+\hat\varphi(x,y')),\\
&\varphi_1(x,y)=\beta(x,y)(-y_{2n+1}+\hat\varphi_1(x,y')),\\
\end{split}
 \end{equation}
where $y'=(y_1,\ldots,y_{2n})$, $\alpha, \beta, \hat\varphi, \hat\varphi_1$ are smooth functions defiend near $(0,0)$. We may assume that \eqref{e-gue241124yydII} hold on $D\times D$ and hence $\alpha, \beta, \hat\varphi, \hat\varphi_1\in\cC^\infty(D\times D)$. We may take $D$ small enough so that $\alpha(x,y)\neq0$, $\beta(x,y)\neq0$ at every point of $D\times D$. It is not difficult to see that 
\begin{equation}\label{e-gue241124yyda}
\begin{split}
&{\rm Im\,}\alpha(x,y)=O(\abs{(x-y}),\\
&{\rm Im\,}\beta(x,y)=O(\abs{(x-y}).
\end{split}
\end{equation} 

From \eqref{e-gue241124yydI} and \eqref{e-gue241124yydII}, it is not difficult to see that 
\begin{equation}\label{e-gue241124yydr}
\hat\varphi(x,y')-\hat\varphi_1(x,y')=O(\abs{(x'-y')}^N),\ \ \mbox{for all $N\in\mathbb N$}. 
\end{equation}
In order to simplify notation we will use \(\hat b(x,y,t,k)\), \(\hat b_j(x,y,t)\) and \(I\) instead of \(\hat b^\chi(x,y,t,k)\) and \(\hat b^\chi_j(x,y,t)\) and \(I^\chi\) keeping in mind that \(\hat b(x,y,t,k)\), \(\hat b_j(x,y,t)\) and \(I\) will depend on \(\chi\).
 Let $\Td{\hat b}(x,y,\Td t,k)\in\cC^\infty(D\times D\times I^{\mathbb C})$ be an almost analytic extension of $\hat b(x,y,t,k)$ in $t$ variable, where $I^{\mathbb C}$ is a bounded open set in $\mathbb C$ so that $I^{\mathbb C}\cap\mathbb R=I$. Recall that $I$ is a bounded open interval as in \eqref{e-gue241115ycdaz}.
 We take $\Td{\hat b}(x,y,\Td t,k)$ so that 
 \begin{equation}\label{e-gue241115ycdazz}
\begin{split}
{\rm supp\,}_{\Td t}\Td{\hat b}(x,y,\Td t,k)\subset I^{\mathbb C},\ \ j=0,1,2,\ldots, 
\end{split}
\end{equation}
and 
\begin{equation}\label{e-gue241124yydm}
\Td{\hat b}(x,y,\frac{\alpha(x,y)}{\beta(x,y)}t,k)\sim\sum^{+\infty}_{j=0}\Td{\hat b}_j(x,y,\frac{\alpha(x,y)}{\beta(x,y)}t)k^{n+1-j}\ \ \mbox{in $S^{n+1}_{{\rm loc\,}}(1;D\times D\times\mathbb R)$},
\end{equation}
 where $\Td{\hat b}_j(x,y,\Td t)\in\cC^\infty(D\times D\times I^{\mathbb C})$ denotes an almost analytic extension of $\hat b_j(x,y,t)$ in $t$ variable, $j=0,1,\ldots$, so that ${\rm supp\,}_{\Td t}\Td{\hat b}_j(x,y,\Td t)\subset I^{\mathbb C}$, $j=0,1,2,\ldots$, and 
 \begin{equation}\label{e-gue241124yydn}
\Td{\hat b}_j(x,y,\Td t)=\sum^{N_j}_{s=0}\hat a_{j,s}(x,y)(\frac{\pr^s}{\pr\Td t^s}\Td\chi)(\frac{\alpha(x,y)}{\beta(x,y)}t)(\frac{\alpha(x,y)}{\beta(x,y)}t)^{n+s-j}, 
 \end{equation}
 $\Td\chi$ is an almost analytic extension of $\chi$ with ${\rm supp\,}\Td\chi\subset I^{\mathbb C}$ and the \(\hat a_{j,s}(x,y)\)'s are independent of \(\chi\).

 From \eqref{e-gue241124yyda}, we can apply Stokes' theorem to \eqref{e-gue241124ycd} and get 
\begin{equation}\label{e-gue241124ycdp}
\chi_k(A)(x,y)=\int_0^{+\infty} 
e^{ikt\alpha(x,y)(-y_{2n+1}+\hat\varphi_1(x,y'))}\Td{\hat b}(x,y,\frac{\alpha(x,y)}{\beta(x,y)}t,k)\frac{\alpha(x,y)}{\beta(x,y)}dt+O\left(k^{-\infty}\right)~\text{on}~D\times D.
\end{equation}
 From \eqref{e-gue241124yydr} and notice that ${\rm Im\,}\hat\varphi_1(x,y')\geq C\abs{x'-y'}^2$, where $C>0$ is a constant, we can change $\hat\varphi_1(x,y')$ in \eqref{e-gue241124ycdp} to $\hat\varphi(x,y')$ and get 
 \begin{equation}\label{e-gue241124ycdq}
\chi_k(A)(x,y)=\int_0^{+\infty} 
e^{ikt\varphi(x,y))}\Td{\hat b}(x,y,\frac{\alpha(x,y)}{\beta(x,y)}t,k)\frac{\alpha(x,y)}{\beta(x,y)}dt+O\left(k^{-\infty}\right)~\text{on}~D\times D.
\end{equation}
 From Borel construction, we can find $ b^\chi(x,y,t,k)\in S^{n+1}_{\mathrm{loc}}
(1;D\times D\times{\R}_+)$, such that 
$b^\chi(x,y,t,k)-\frac{\alpha(x,y)}{\beta(x,y)}\Td{\hat b}(x,y,\frac{\alpha(x,y)}{\beta(x,y)}t,k)$ vanishes to infinite order at $x=y$, 
\begin{equation}\label{e-gue241124ycdIg}
\begin{split}
&b^\chi(x,y,t,k)\sim\sum_{j=0}^{+\infty}b^\chi_{j}(x,y,t)k^{n+1-j}~
\text{in $S^{n+1}_{\mathrm{loc}}(1;D\times D\times{\R}_+)$},\\
&b^\chi_j(x,y,t)\in\mathscr{C}^\infty(D\times D\times{\R}_+),~j=0,1,2,\ldots,
\end{split}
\end{equation}
and ${\rm supp\,}_t b^\chi(x,y,t,k)\subset I^\chi$, ${\rm supp\,}_t b^\chi_j(x,y,t)\subset I^\chi$, $j=0,1,2,\ldots$, and \begin{equation}\label{e-gue241124ycdg}
\begin{split}
    &b^\chi_j(x,y,t)=\sum^{+\infty}_{s=0}a_{j,s}(x,y)\chi^{(s)}(t)t^{n+s-j},\quad j=0,1,\ldots,\\
     &a_{j,s}(x,y)\in\cC^\infty(D\times D),\ \ j,s=0,1,\ldots,
\end{split}\end{equation}
where the infinite sum in \eqref{e-gue241124ycdg} converges uniformly in $\cC^{\infty}(K\times K\times I)$ topology, for any compact subsets $K\subset D$, \(I\subset \R\) and the \(a_{j,s}(x,y)\)'s are independent of \(\chi\). Since $b^\chi(x,y,t,k)-\frac{\alpha(x,y)}{\beta(x,y)}\Td{\hat b}(x,y,\frac{\alpha(x,y)}{\beta(x,y)}t,k)$ vanishes to infinite order at $x=y$, we can replace $\frac{\alpha(x,y)}{\beta(x,y)}\Td{\hat b}(x,y,\frac{\alpha(x,y)}{\beta(x,y)}t,k)$ in \eqref{e-gue241124ycdq} by $b^\chi(x,y,t,k)$. The theorem follows. 
 \end{proof}

Fix $p\in D$, we take local coordinates $x=(x_1,\ldots,x_{2n+1})$ defined on $D$ so that $x(p)=0$ and $\mathcal{T}=\frac{\pr}{\pr x_{2n+1}}$. Until further notice, we work in $D$ and work with the local coordinates $x=(x_1,\ldots,x_{2n+1})$.
In the rest of this section, we assume that $\varphi$ satisfies \eqref{Eq:PhaseFuncMainThm} with $\lambda(x)=1+O(\abs{x}^3)$, \eqref{eq:PiFIO}, \eqref{e-gue201226ycdb}, \eqref{e-gue241220yydq} and has the following form 
\begin{equation}\label{eqn:ThmI2}
\begin{split}
    &\varphi(x,y)=-y_{2n+1}+x_{2n+1}+\hat\varphi(x,y'),\quad y'=(y_1,\cdots,y_{2n}),\\
    &\hat\varphi(x,y')\in\cC^\infty(D\times D),\\
    &\hat\varphi(x,y')=\frac{i}{2}\sum_{j=1}^n\left[|z_j-w_j|^2+(\overline{z}_jw_j-z_j\overline{w}_j)\right]+O\left(|(x,y')|^4\right).
    \end{split}
\end{equation}
This is always possible (see~\cite{Hsiao_Shen_2nd_coefficient_BS_2020}*{(3.99), Proposition 3.2}).  
By using integration by parts, we can take $b^\chi_j(x,y,t)$ in \eqref{Eq:LeadingTermMainThm}, $j=0,1,\ldots$, so that 
\begin{equation} \label{e-gue241128yydz}
\begin{split}
&b^\chi_j(x,y,t)=b^\chi_j(x,y',t)=\sum^{+\infty}_{s=0}a_{j,s}(x,y')\chi^{(s)}(t)t^{n+s-j},\quad j=0,1,\ldots,\\
     &a_{j,s}(x,y')\in\cC^\infty(D\times D),\ \ j,s=0,1,\ldots
     \end{split}
     \end{equation}
and the \(a_j(x,y')\)'s are independent of \(\chi\).
Assume $D=\widehat{D}\times(-\epsilon,\epsilon),$ $\widehat{D}$ is an open set of $\R^{2n}$ of $0\in R^{2n}$, $\varepsilon>0$ is a constant.
Fix $\tau\in\cC^{\infty}_c((-\epsilon,\epsilon)), \tau\equiv 1$ near $0\in\mathbb R$ and $m_0\in\mathbb R$. Consider the map 
\begin{align*}
    \reallywidetilde{\chi_k(A)}:\cC^{\infty}_{c}(\hat D)&\longrightarrow\cC^{\infty}(D),\\
    u&\longmapsto\frac{k}{2\pi}\int\chi_k(A)(x,y)e^{ikm_0y_{2n+1}}\tau(y_{2n+1})u(y')dy'dy_{2n+1}.
\end{align*}
This distribution kernel of $\reallywidetilde{\chi_k(A)}$ is of the form:
\begin{equation}\label{eqn:ThmI3}
\begin{split}
    \reallywidetilde{\chi_k(A)}(x,y')&=\frac{k}{2\pi}\int e^{ikt\varphi(x,y)+ikm_0y_{2n+1}}b^\chi(x,y',t,k)\tau(y_{2n+1})dy_{2n+1}dt+O(k^{-\infty})\\
    &=\frac{k}{2\pi}\int e^{ikt(-y_{2n+1}+\widehat{\varphi}(x,y'))+ikm_0y_{2n+1}}b^\chi(x,y',t,k)\tau(y_{2n+1})dy_{2n+1}dt+O(k^{-\infty})\\
    &=\frac{k}{2\pi}\int e^{ikt(-y_{2n+1}+\widehat{\varphi}(x,y'))+ikm_0y_{2n+1}}b^\chi(x,y',t,k)dy_{2n+1}dt+O(k^{-\infty})\\
    &=e^{ikm_0\widehat{\varphi}(x,y')}b^\chi(x,y',m_0,k)+O(k^{-\infty}). \end{split}
\end{equation} 

We first prove the following uniqueness of the expansion:

\begin{theorem}\label{t-gue241125yyda}
    With the notations and assumptions above, assume
    $$\int e^{it\varphi(x,y)}a(x,y',t,k)dt=\int e^{it\varphi(x,y)}\widehat{a}(x,y',t,k)dt+O(k^{-\infty})\quad \text{on }D\times D, $$
    where $\varphi$ satisfies \eqref{eqn:ThmI2}, $a(x,y',t,k)$, $\hat a(x,y',t,k)\in S^{n+1}_{{\rm loc\,}}(1;D\times D\times\mathbb R)$, 
    \[\begin{split}
        &\mbox{$a(x,y',t,k)\sim\sum^{\infty}_{j=0}a_j(x,y',t)k^{n+1-j}$ in $S^{n+1}_{{\rm loc}}(1;D\times D\times\mathbb R)$},\\
        &\mbox{$\widehat{a}(x,y',t,k)\sim\sum^{\infty}_{j=0}\widehat{a}_j(x,y',t)k^{n+1-j}$ in $S^{n+1}_{{\rm loc\,}}(1;D\times D\times\mathbb R)$,}\\
        &\mbox{$a_j, \hat a_j\in\cC^\infty(D\times D\times\mathbb R)$, $j=0,1,\ldots$},\\
        &\mbox{$\underset{t}{\rm supp\,}a\subset I$, $\underset{t}{\rm supp\,}\widehat{a}\subset I$, $\underset{t}{\rm supp\,}a_j\subset I,$ $\underset{t}{\rm supp\,}\widehat{a}_j\subset I, j=0,1,\dots,$}
        \end{split}\]
        $I \Subset\mathbb R$ is a bounded interval. Then
    $$a_j(x,y',t)=\widehat{a}_j(x,y',t)+O(|(x'-y')|^N),\quad \forall N,\forall t\in I, \forall j=0,1,\dots.$$
\end{theorem}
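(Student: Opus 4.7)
The plan is to set $d := a - \hat a\sim\sum_j d_j k^{n+1-j}$ with $d_j := a_j - \hat a_j$ compactly supported in $I$ in the $t$-variable, and to show each $d_j(x,y',t)$ vanishes to infinite order at $y'=x'$; this is exactly the claimed estimate $a_j-\hat a_j = O(|x'-y'|^N)$ for every $N$. The first step would be to reduce to termwise information: because each $d_j$ is compactly supported in $t$ and $\operatorname{Im}\varphi\ge 0$, the quantity $\int e^{it\varphi(x,y)}d_j(x,y',t)\,dt$ is bounded in $(x,y)$ and independent of $k$. Writing $d=\sum_{j\le N}k^{n+1-j}d_j+r_N$ with $r_N\in S^{n-N}_{{\rm loc}}(1)$, the remainder integral $\int e^{it\varphi}r_N\,dt$ is $O(k^{n-N})$ locally uniformly on $D\times D$; combining this with the hypothesis $\int e^{it\varphi}d\,dt=O(k^{-\infty})$ and peeling off powers of $k$ successively forces $\int e^{it\varphi(x,y)}d_j(x,y',t)\,dt=0$ on $D\times D$ for every $j\ge 0$.

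The conceptual heart of the argument is a Paley--Wiener observation. Using the special form $\varphi = x_{2n+1}-y_{2n+1}+\hat\varphi(x,y')$ together with $\varphi(x,y)=0\Leftrightarrow y=x$ and the independence of $\hat\varphi$ of $y_{2n+1}$, one has $\hat\varphi(x,x')\equiv 0$. Restricting the identity from the previous step to $y'=x'$ would give
\[
\int e^{it(x_{2n+1}-y_{2n+1})}\,d_j(x,x',t)\,dt = 0\quad\text{for all }x\in D,~y_{2n+1}\in(-\epsilon,\epsilon).
\]
For fixed $x$, the left hand side is the Fourier transform in $t$ of the compactly supported function $d_j(x,x',\cdot)$ evaluated at $y_{2n+1}-x_{2n+1}$; by Paley--Wiener it extends to an entire function of its argument that vanishes on a real interval, hence vanishes identically. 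Inverse Fourier transform then yields $d_j(x,x',t)\equiv 0$.

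To upgrade this diagonal vanishing to infinite-order vanishing at $y'=x'$, I would proceed by induction on $|\alpha|$ to show $\partial_{y'}^\alpha d_j|_{y'=x'}\equiv 0$ for every multi-index $\alpha\in\N_0^{2n}$. Applying $\partial_{y'}^\alpha$ to $\int e^{it\varphi}d_j\,dt=0$, expanding by Leibniz and then evaluating at $y'=x'$, each term with $|\beta|<|\alpha|$ contains the factor $\partial_{y'}^\beta d_j|_{y'=x'}$ and vanishes by the induction hypothesis; only the $\beta=\alpha$ term survives, leaving
\[
\int e^{it(x_{2n+1}-y_{2n+1})}\,\partial_{y'}^\alpha d_j|_{y'=x'}(x,t)\,dt = 0,
\]
to which the Paley--Wiener argument of the previous paragraph applies verbatim. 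This closes the induction and proves $d_j(x,y',t)=O(|x'-y'|^N)$ for every $N$. The only delicate point is the Leibniz bookkeeping in the induction step; it collapses cleanly precisely because every nontrivial $y'$-derivative of $\varphi$ always pairs with a strictly lower-order $y'$-derivative of $d_j$, which vanishes by the inductive hypothesis.
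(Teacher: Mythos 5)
Your argument is correct for the statement exactly as printed, where the phase is $e^{it\varphi}$ with no factor of $k$: then each $\int e^{it\varphi}d_j\,dt$ is indeed $k$-independent, peeling powers of $k$ against the $O(k^{-\infty})$ hypothesis forces these integrals to vanish identically on $D\times D$, and restricting to $y'=x'$ with $y_{2n+1}$ free, applying Paley--Wiener in $t$, and running the Leibniz induction on $\partial_{y'}^{\alpha}$ yields the infinite-order vanishing of $a_j-\hat a_j$ at $x'=y'$. This is a genuinely different route from the paper's: the paper first applies the partial Fourier transform in $y_{2n+1}$ at frequency $km_0$ (the construction in \eqref{eqn:ThmI3}) to convert the oscillatory-integral identity into the pointwise identity $a(x,y',m_0,k)-\hat a(x,y',m_0,k)=e^{-ikm_0\hat\varphi(x,y')}F_k(x,y')$ with $F_k=O(k^{-\infty})$, and only afterwards extracts each coefficient and its derivatives on the diagonal by letting $k\to\infty$. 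You reverse the order: asymptotics in $k$ first, Fourier analysis in $t$ versus $y_{2n+1}$ second.

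Be aware, however, that the phase in the statement is almost certainly a typo for $e^{ikt\varphi}$: the paper's own proof ``repeats the process in \eqref{eqn:ThmI3}'', which only makes sense for the phase $ikt\varphi+ikm_0y_{2n+1}$, and the theorem is invoked in the proof of Theorem~\ref{t-gue241115yyd} for expansions of the form $\int e^{ikt\varphi(x,y)}(\cdots)\,dt$. With the factor $k$ present, your first step breaks down: $\int e^{ikt\varphi}d_j\,dt$ depends on $k$ and, for $y\neq x$, already tends to zero rapidly as $k\to\infty$ (Riemann--Lebesgue / quadratic decay of $\operatorname{Im}\varphi$), so the peeling argument only shows each such integral is $o(1)$, not that it vanishes. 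For the intended statement one must work at frequency scale $k$ in $y_{2n+1}$ as the paper does; equivalently, on $y'=x'$ the hypothesis says the Fourier transform of the compactly supported function $t\mapsto d(x,x',t,k)$ is $O(k^{-\infty})$ on an interval of length comparable to $k$, which together with its rapid decay (uniform $C^{\infty}$ bounds of size $k^{n+1}$ on a fixed compact $t$-support) forces $d(x,x',\cdot,k)=O(k^{-\infty})$ in sup-norm, and hence $d_j(x,x',\cdot)=0$. Your Paley--Wiener step and the Leibniz induction then adapt to close the argument.
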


\begin{proof}
    Fix $m_0\in I.$ We can repeat the process in \eqref{eqn:ThmI3} and deduce that 
    $$e^{ikm_0\widehat{\varphi}(x,y')}a(x,y',m_0,k)=e^{ikm_0\widehat{\varphi}(x,y')}\widehat{a}(x,y',m_0,k)+F_k(x,y'),\quad F_k(x,y')=O(k^{-\infty}).$$
    Hence, 
    \begin{equation}\label{eqn:ThmI4}
        a(x,y',m_0,k)-\widehat{a}(x,y',m_0,k)=e^{-ikm_0\widehat{\varphi}(x,y')}F_k(x,y').
    \end{equation}
    From \eqref{eqn:ThmI4}, we have 
    \begin{align*}
        a_0(x,x',m_0)-\widehat{a}_0(x,x',m_0)&=\lim_{k\to\infty}\frac{1}{k^{n+1}}e^{-ikm_0\widehat{\varphi}(x,x')}F_k(x,x')\\
        &=\lim_{k\to\infty}\frac{1}{k^{n+1}}e^{ikm_0x_{2n+1}}F_k(x,x')=0.
    \end{align*}
    Similarly,
    $$\left(\partial^{\alpha}_{x}\partial^{\beta}_{y'}(a_0-\widehat{a}_0)\right)(x,x',m_0)=\lim_{k\to\infty}\frac{1}{k^{n+1}}\partial^{\alpha}_{x}\partial^{\beta}_{y'}\left(e^{-ikm_0\widehat{\varphi}(x,y')}F_k(x,y')|_{(x,x')}\right)=0,$$
    for all $\alpha\in\N_0^{2n+1},$ $\beta\in\N_0^{2n}.$
    Similarly, we can show that $a_j(x,y',t)=\widehat{a}_j(x,y',t)+O(|x'-y'|^N),$ for all $N\in\N,$ $j=1,2,\cdots.$
\end{proof}

From now on, we assume that $b^\chi_j(x,y,t)$, $j=0,1,\ldots,n$, satisfy \eqref{e-gue241128yydz}. 

\begin{theorem}\label{thm:s1_indep_of_deri_of_chi}
    With the notations and assumptions used above, recall that $\varphi$ satisfies \eqref{e-gue201226ycdb}, \eqref{e-gue241220yydq}, \eqref{eqn:ThmI2}, and  $b^\chi_j(x,y,t)$, $j=0,1,\ldots$, in \eqref{Eq:LeadingTermMainThm}, satisfy \eqref{e-gue241128yydz}.
    Then, for all $x\in D$, $t\in\mathbb R$ and \(\chi\in\mathscr{C}_c^\infty(\R_+)\), 
    \begin{equation}\label{eqn:ThmIII4}
    \begin{split}
    &b^\chi_0(0,0,t)=s_0(0,0)\chi(t)t^n,\\
&\frac{\pr b^\chi_0}{\pr x_j}(0,0,t)=\frac{\pr s_0}{\pr x_j}(0,0)\chi(t)t^n,\ \ j=1,\ldots,2n+1,\\
&\frac{\pr b^\chi_0}{\pr y_j}(0,0,t)=\frac{\pr s_0}{\pr y_j}(0,0)\chi(t)t^n,\ \ j=1,\ldots,2n+1,\\
&\frac{\pr^2b^\chi_0}{\pr\ol z_j\pr z_\ell}(0,0,t)=\frac{\pr^2b^\chi_0}{\pr\ol w_j\pr w_\ell}(0,0,t)=0,\ \ j, \ell=1,\ldots,n,
\end{split}
    \end{equation}
and
    \begin{equation}\label{eqn:ThmIII5}
        a_{1,s}(p,p)=0,\quad s=1,2,\cdots,
    \end{equation}
    where $s_0(x,y)$ is as in \eqref{eq:PiFIO} 
    and $s_0(x,y)$ satisfies \eqref{e-gue241128yyd}, 
    \eqref{e-gue241120yydg}, \eqref{e-gue241120yydu}, \eqref{e-gue241125ycd} and $a_{1,s}(x,y')$ is as in \eqref{e-gue241128yydz}, $s=1,2,\cdots.$
\end{theorem}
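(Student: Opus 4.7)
The plan is to compute \(\chi_k(A)(x,y)\) in a second, independent way---via functional calculus applied to the Szeg\H{o} kernel representation of \(\Pi\)---and to match with the expansion of Theorem~\ref{thm:ExpansionMain} using the uniqueness statement Theorem~\ref{t-gue241125yyda}. We exploit the \(\mathcal{T}\)-adapted normal form in which we work: the phase \(\varphi\) has the form \eqref{eqn:ThmI2} (so \(\mathcal{T}_y\varphi\equiv -1\)), the coordinate choice gives \(\mathcal{T}=\partial/\partial x_{2n+1}\), and (by the second part of Theorem~\ref{t-gue241115ycd}) the Szeg\H{o} symbol \(s(x,y,t)\) in \eqref{eq:PiFIO} can be chosen \(\mathcal{T}_x\)- and \(\mathcal{T}_y\)-invariant, i.e.\ independent of \(x_{2n+1}\) and \(y_{2n+1}\).

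The functional-calculus identity we intend to exploit is \(\chi_k(A)=\Pi\,\chi(-ik^{-1}\mathcal{T})\,\Pi\), which holds because \(\chi\) is supported in \(\R_+\) and \(A=\Pi(-i\mathcal{T})\Pi\) (using \([\Pi,\mathcal{T}]=0\), valid when \(\mathcal{T}\) is CR, which is compatible with our choice of \(\varphi\)). Fourier inversion writes the middle factor as a superposition of translations \(e^{sk^{-1}\mathcal{T}}\); using the translation identity \(\varphi(x+(s/k)e_{2n+1},y)=s/k+\varphi(x,y)\) together with the \(\mathcal{T}_x\)-invariance of \(s\), the \(s\)-integral collapses to
\[\chi_k(A)(x,y) = \int_0^\infty e^{it\varphi(x,y)}\chi(t/k)\,s(x,y,t)\,dt + O(k^{-\infty}).\]
Substituting \(t=k\sigma\) and expanding the classical symbol \(s(x,y,k\sigma)\sim\sum_j s_j(x,y)(k\sigma)^{n-j}\), then invoking Theorem~\ref{t-gue241125yyda}, we obtain the structural identity
\[b^\chi_j(x,y',t) = s_j(x,y')\,\chi(t)\,t^{n-j} + R_j(x,y',t),\]
where each \(R_j(x,y',t)\) vanishes to infinite order at \(x'=y'\).

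Both conclusions of the theorem then follow from this identity. Since functions flat in \(|x'-y'|\) have all their partial derivatives vanishing on the diagonal, every \(\partial^\alpha_x\partial^\beta_y\)-derivative of \(b^\chi_0\) at \((0,0)\) equals the corresponding derivative of \(s_0(x,y')\chi(t)t^n\); the explicit values in \eqref{eqn:ThmIII4} then come from \eqref{e-gue241120yydg} and \eqref{e-gue241120yydu}. For \eqref{eqn:ThmIII5}, matching the representation \(b^\chi_1(x,y',t)=\sum_s a_{1,s}(x,y')\chi^{(s)}(t)t^{n+s-1}\) from \eqref{e-gue241128yydz} against \(s_1(x,y')\chi(t)t^{n-1}\) at \((x,y')=(0,0')\) yields
\[\sum_{s=0}^\infty a_{1,s}(0,0')\,\chi^{(s)}(t)\,t^{n+s-1} = s_1(0,0)\,\chi(t)\,t^{n-1}\]
for all \(\chi\in\mathscr{C}^\infty_c(\R_+)\) and all \(t>0\); the linear independence of the evaluations \(\chi\mapsto\chi^{(s)}(t_0)\) at any fixed \(t_0\) then forces \(a_{1,s}(0,0')=0\) for every \(s\geq 1\).

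The hard part will be rigorously justifying the functional-calculus identity \(\chi_k(A)=\Pi\chi(-ik^{-1}\mathcal{T})\Pi\) in a setting that does not globally assume \(\mathcal{T}\) to be CR. The strategy is to work with the phase and symbol data up to sufficient finite order at the distinguished point \(p=0\): the infinite-order vanishing of \(\ddbar_{b,x}\varphi\) on the diagonal from \eqref{e-gue201226ycdb}, the Taylor normalization \eqref{e-gue241220yydq}, and the adapted choice of \(s\) via integration by parts in \(t\) should allow the commutator \([\Pi,\mathcal{T}]\) and the failure of exact \(\mathcal{T}\)-invariance of the phase to contribute only error terms that are flat on the diagonal at \(p\), which is enough for the pointwise identities \eqref{eqn:ThmIII4} and \eqref{eqn:ThmIII5}.
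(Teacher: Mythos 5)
Your overall strategy hinges on the identity \(\chi_k(A)=\Pi\,\chi(-ik^{-1}\mathcal{T})\,\Pi\), and this is where the proposal breaks down. That identity requires \([\Pi,\mathcal{T}]=0\), i.e.\ that \(\mathcal{T}\) be a CR vector field, which is exactly what is \emph{not} assumed in Theorem~\ref{thm:s1_indep_of_deri_of_chi}. Several of your supporting steps silently import the same hypothesis: the second part of Theorem~\ref{t-gue241115ycd} (full \(\mathcal{T}_x\)- and \(\mathcal{T}_y\)-invariance of the Szeg\H{o} symbol) is only stated under the CR assumption on \(\mathcal{T}\); and the translation identity \(\varphi(x+(s/k)e_{2n+1},y)=s/k+\varphi(x,y)\) fails for the phase \eqref{eqn:ThmI2}, because there \(\hat\varphi(x,y')\) still depends on \(x_{2n+1}\) (only in the CR case, \eqref{e-gue241210yyd}, is the phase of the form \(x_{2n+1}-y_{2n+1}+\hat\varphi(x',y')\)). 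Moreover, the structural identity you derive, \(b^\chi_j(x,y',t)=s_j(x,y')\chi(t)t^{n-j}+(\text{flat at }x'=y')\) for all \(j\), is strictly stronger than the theorem (it would give \(a_{j,s}\) flat on the diagonal for all \(j\) and all \(s\geq1\)), and in this paper that conclusion is precisely the content of the CR-\(\mathcal{T}\) case; obtaining it without that hypothesis is not a finite-order bookkeeping issue at \(p\) that one can wave through, and your closing paragraph acknowledges rather than closes this gap. Finally, for \eqref{eqn:ThmIII4} the paper does not go through functional calculus at all: it repeats the \(\ddbar_b\)-based arguments used for \eqref{e-gue241120yydu} and \eqref{e-gue241125ycd} (infinite-order vanishing of \(\ddbar_{b,x}\varphi\) on the diagonal, applied to \(\chi_k(A)\) and its adjoint), so that part of your argument also rests on the unproved identity.

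The paper's actual route for \eqref{eqn:ThmIII5} avoids the full functional-calculus identity entirely. It uses only the \emph{exact} one-step relation \(\tfrac1k\Pi(-i\mathcal{T})\chi_k(A)=\widehat{\chi}_k(A)\) with \(\widehat\chi(t)=t\chi(t)\) (valid because \(A\chi_k(A)=k\,\widehat{\chi}_k(A)\) spectrally, with no commutation of \(\Pi\) and \(\mathcal{T}\) needed), compares it with \(\tfrac1k(-i\mathcal{T})\Pi\chi_k(A)\), and shows that the single commutator error is \(O(k^{n-1})\) at the point \(p\) thanks to the normalization \(g_0-\hat g_0=O(|x-y|^3)\) coming from \eqref{eqn:ThmI2}. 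This yields the relation \eqref{eqn:ThmIII12} between \(\sum_s a_{1,s}(p,p')\chi^{(s)}(m_0)m_0^{n+s}\) and \(\sum_s a_{1,s}(p,p')(t\chi(t))^{(s)}|_{t=m_0}m_0^{n-1+s}\), from which \(a_{1,s}(p,p)=0\) for \(s\geq1\) follows by choosing test functions \(\chi\) with prescribed jets at \(m_0\) and inducting on \(s\). If you want to salvage your approach, you would need to either restrict to CR \(\mathcal{T}\) (which proves a different theorem) or replace the global identity \(\chi_k(A)=\Pi\chi(-ik^{-1}\mathcal{T})\Pi\) by a one-commutator comparison of the type the paper uses.
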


\begin{proof}
The prove of \eqref{eqn:ThmIII4} is the same as the proofs of \eqref{e-gue241120yydu} and \eqref{e-gue241125ycd}. 
   
   From \eqref{eq:PiFIO} and \eqref{eqn:ThmI2}, we can check that  
    \[\begin{split}
        &\left((-i)\mathcal{T}\Pi\right)(x,y)=\int e^{it\varphi(x,y)}g(x,y',t)dt,\\
        &\left(\Pi(-i\mathcal{T})\right)(x,y)=\int e^{it\varphi(x,y)}\hat g(x,y',t)dt,\\
        &g(x,y',t)\sim\sum^{\infty}_{j=0}t^{n+1-j}g_j(x,y')\ \ \mbox{in $S^{n+1}_{1,0}(D\times D\times\mathbb R)$},\\
        &\hat g(x,y',t)\sim\sum^{\infty}_{j=0}t^{n+1-j}\hat g_j(x,y')\ \ \mbox{in $S^{n+1}_{1,0}(D\times D\times\mathbb R)$},\\
        &g_j(x,y'), \hat g_j(x,y')\in\cC^\infty(D\times D),\ \ j=0,1,\ldots,
        \end{split}\]
    \begin{equation}\label{eqn:ThmIII7}
    g_0(x,y')-\hat g_0(x,y')=O(\abs{x-y}^3).
    \end{equation}
    Let $\reallywidetilde{\chi_k(A)}$(x,y') be as in \eqref{eqn:ThmI3}. We have 
\begin{align}
        \frac{1}{k}(-i\mathcal{T})\Pi\reallywidetilde{\chi_k(A)}(p,p)=k^{n+1}m_0b^\chi_0(p,p',m_0)+k^nm_0b^\chi_1(p,p',m_0)+O(k^{n+1})\nonumber\\=k^{n+1}m_0^{n+1}\chi(m_0)s_0(p,p')+\sum^{\infty}_{s=0}k^na_{1,s}(p,p')\chi^{(s)}(m_0)m_0^{n+s}+O(k^{n-1}). \label{eqn:ThmIII10}
    \end{align}
Let $\widehat{\chi}(t)=t\chi(t).$ We have $\frac{1}{k}\Pi(-i\mathcal{T})\reallywidetilde{\chi_k(A)}=\reallywidetilde{\widehat{\chi_k}(A)}.$ From this observation, we get 
    \begin{equation}\label{eqn:ThmIII11}
    \begin{split}
        &\reallywidetilde{\widehat{\chi_k}(A)}(p,p)\\
&=k^{n+1}m_0^{n+1}\chi(m_0)s_0(p,p')+\sum^{\infty}_{s=0}k^na_{1,s}(p,p')(t\chi(t))^{(s)}|_{t=m_0}m_0^{n-1+s}+O(k^{n-1}).
    \end{split}
    \end{equation}

From \eqref{eqn:ThmIII7} and stationary phase formula of H\"ormander, we can check that 
    \begin{equation}\label{eqn:ThmIII9}
        \left(\frac{1}{k}(-i\mathcal{T})\Pi\reallywidetilde{\chi_k(A)}\right)(p,p)-\left(\frac{1}{k}\Pi(-i\mathcal{T})\reallywidetilde{\chi_k(A)}\right)(p,p)=O(k^{n-1}).
    \end{equation}
    From \eqref{eqn:ThmIII10}, \eqref{eqn:ThmIII11} and \eqref{eqn:ThmIII9}, we get 
    \begin{equation}\label{eqn:ThmIII12}
        \sum^{\infty}_{s=0}a_{1,s}(p,p')\chi^{(s)}(m_0)m_0^{n+s}=\sum^{\infty}_{s=0}a_{1,s}(p,p')(t\chi(t))^{(s)}|_{t=m_0}m_0^{n-1+s}.
    \end{equation}

    Take $\chi$ so that $\chi(m_0)\neq 0$ and $\chi^{(s)}(m_0)=0,$ for all $s\geq 1.$ From \eqref{eqn:ThmIII12}, we get 
    $$a_{1,0}(p,p')\chi(m_0)m_0^n=a_{1,0}(p,p')\chi(m_0)m_0^n+a_{1,1}(p,p')\chi(m_0)m_0^n.$$ Thus, $a_{1,1}(p,p')=0.$
    Suppose $a_{1,s}(p,p')=0,$ for all $1\leq s\leq N_0,$ for some $N_0\in\N.$ Take $\chi$ so that $\chi^{(N_0)}(m_0)\neq 0$, $\chi^{(s)}(m_0)=0$, $s>N_0.$ By induction assumption and \eqref{eqn:ThmIII12}, we get 
    $$a_{1,0}(p,p')\chi(m_0)m_0^n=a_{1,0}(p,p)\chi(m_0)m_0^n+a_{1,N_0+1}(p,p')\chi^{(N_0)}(m_0)m_0^{n+N_0}.$$
    Thus, $a_{1,N_0+1}(p,p')=0.$ By induction we get \eqref{eqn:ThmIII5}.
\end{proof}

\begin{proof}[Proof of Theorem~\ref{t-gue241115yyd}]
Let $\chi, \tau\in\cC^\infty_c(\mathbb R_+)$, $\tau\equiv1$ on ${\rm supp\,}\chi$. By Theorem~\ref{t-gue241123yyd}, Theorem~\ref{thm:s1_indep_of_deri_of_chi}, we have the following expansion:
\begin{equation}\label{eqn:chiktp}
\begin{split}
    &\chi_k(T_P)(x,y)=\int e^{ikt\varphi(x,y)}\left(k^{n+1}\hat b_0(x,y',t,\chi)+k^n\hat b_1(x,y',t,\chi)+O(k^{n-1})\right)dt,\\
    &\tau_k(T_P)(x,y)=\int e^{ikt\varphi(x,y)}\left(k^{n+1}\hat b_0(x,y',t,\tau)+k^n\hat b_1(x,y',t,\tau)+O(k^{n-1})\right)dt,
\end{split}
\end{equation}
where
    \begin{equation}\label{e-gue241206ycd}
    \begin{split}
     &\hat b_0(x,y',t,\chi)=\hat b_0(x,y,t,\chi)=b^\chi_0(x,y,t)=\sum^{+\infty}_{s=0}a_{0,s}(x,y')\chi^{(s)}(t)t^{n+s},\\
        &\hat b_1(x,y',t,\chi)=\hat b_1(x,y,t,\chi)=b^\chi_1(x,y,t)=\sum^{+\infty}_{s=0}a_{1,s}(x,y')\chi^{(s)}(t)t^{n+s-1},\\
     &a_{j,s}(x,y')\in\cC^\infty(D\times D),\ \ j,s=0,1,\ldots,\\
     &a_{1,s}(p,p)=0,\ \ s=1,2,\ldots, 
     \end{split}
    \end{equation}
    where $b^\chi_0(x,y,t)$, $b^\chi_1(x,y,t)$ are as in \eqref{Eq:LeadingTermMainThm}. 
    By the relation, 
\begin{equation}\label{eqn:relation tauchi=chi}
    \tau_k(T_P)\circ\chi_k(T_P)=(\tau\chi)_k(T_P)=\chi_k(T_P),
\end{equation}
and complex stationary phase formula, we have 
\begin{equation}
    \begin{split}
       &\tau_k(T_P)\circ\chi_k(T_P)(x,y)\\
       &=\int e^{ik(s\varphi(x,u)+t\varphi(u,y))}\Bigg[k^{2n+2}\hat b_0(x,u,s,\tau)\hat b_0(u,y,t,\chi)\\
       &\quad+k^{2n+1}\Big(\hat b_1(x,u,s,\tau)\hat b_0(u,y,t,\chi)+\hat b_0(x,u,s,\tau)\hat b_1(u,y,t,\chi)\Big)+O(k^{2n})\Bigg] V(u)dudsdt\\ 
       &=\int e^{ik(t(\sigma\varphi(x,u)+\varphi(u,y))}\Bigg[k^{2n+2}\hat b_0(x,u,t\sigma,\tau)\hat b_0(u,y,t,\chi)\\
&\quad+k^{2n+1}\Big(\hat b_1(x,u,t\sigma,\tau)\hat b_0(u,y,t,\chi)\\
&\quad+\hat b_0(x,u,t\sigma,\tau)\hat b_1(u,y,t,\chi)\Big)+O(k^{2n})\Bigg] tV(u)dud\sigma dt\\
       &=\int e^{ikt\varphi(x,y)}\Bigg[k^{n+1}\beta^{\chi,\tau}_0(x,y',t)+k^n\beta^{\chi,\tau}_1(x,y',t)+O(k^{n-1})\Bigg]dt,
    \end{split}
    \end{equation}
    where $\beta^{\chi,\tau}_0(x,y',t), \beta^{\chi,\tau}_1(x,y',t)\in\cC^\infty(D\times D\times I)$. From Theorem~\ref{t-gue241125yyda} (the uniqueness of the expansion) and \eqref{e-gue241206ycd}, we see that
    \begin{equation}\label{e-gue241206ycdr}
    \begin{split}
    &\beta^{\chi,\tau}_0(p,p,t)=\hat b_0(p,p,t,\chi)=b^\chi_0(p,p,t)=s_0(p,p)\chi(t)t^n,\\
    &\beta^{\chi,\tau}_1(p,p,t)=\hat b_1(p,p,t,\chi)=a_{1,1}(p,p)\chi(t)t^{n-1}.
    \end{split}
    \end{equation}
From \eqref{e-gue241206ycdr} and the stationary phase formula of H\"ormander, we have  
\[\begin{split}
    &k^na_{1,0}(p,p)t^{n-1}\chi(t)\\
    &= k^nt^{-n-1}L^{(1)}_{(u,\sigma)}\Big(\hat b_0(p,u,\sigma t,\tau)\hat b_0(u,p,t,\chi)e^{2(n+1)f(u)}\Big)|_{(u,\sigma)=(p,1)}\\
    &+(2\pi^{n+1})2k^na_{1,1}(p,p)t^{n-1}\chi(t)s_0(p,p)V(p)\chi(t),
\end{split}\]
where $L^{(1)}_{(u,\sigma)}$ is as in \eqref{e-gue241207ycdm}. 
From \eqref{eqn:ThmIII4} and notice that $s_0(p,p)=\frac{1}{2\pi^{n+1}}\left(\frac{V_{\xi}}{V}\right)(p)$, we can check that 
\begin{align}
    a_{1,0}(p,p)t^{n-1}\chi(t)=&-L^{(1)}_{(u,\sigma)}\Big(s_0(p,u)s_0(u,p)e^{2(n+1)f(u)}\tau(\sigma t)\sigma^n\Big)|_{(u,\sigma)=(p,1)}t^{n-1}\chi(t)\nonumber\\=&-
    L^{(1)}_{(u,\sigma)}\Big(s_0(p,u)s_0(u,p)e^{2(n+1)f(u)}\sigma^n\Big)|_{(u,\sigma)=(p,1)}t^{n-1}\chi(t).
\end{align}
Therefore, by \eqref{e-gue241121yyd}, we have
\begin{equation}
    a_{1,0}(p,p)=s_1(p,p).
\end{equation}
If we assume further $dV=dV_\xi$, then $f\equiv 0$, and thus
\begin{equation}
    a_{1,0}(p,p)=\frac{1}{4\pi^{n+1}}R_{{\rm scal\,}}(p).
\end{equation}
\end{proof} 
\begin{remark}\label{rmk:verifyargumentrmklocalinvariant}
	From the arguments in the proof of Theorem~\ref{t-gue241115yyd} one can see that for any \(j\geq 0\) we have that \(b_j(x,x,t)\) is given by a polynomial in the derivatives of \(b_0,\ldots,b_{j-1}\), \(\xi\) and \(dV\) at \((x,x,t)\). Hence, as a conclusion, we obtain the statement in Remark~\ref{rmk:ajLocalInvariants}.
\end{remark}
\begin{proof}[Proof of Theorem~\ref{t-gue241115ycd}]

 We now prove Theorem~\ref{t-gue241115ycd}. The first part of Theorem~\ref{t-gue241115ycd} follows immediately from Theorem~\ref{t-gue241123yyd} choosing \(P=-i\mathcal{T}\) assuming that \(dV\) is \(\mathcal{T}\)-invariant. Hence, we only need to prove \eqref{e-gue241116yyd}. 

So let us assume that $\mathcal{T}$ is a CR vector field. Let $x=(x_1,\ldots,x_{2n+1})$ be local coordinates defined on an open set $D$ of $X$ with $\mathcal{T}=\frac{\pr}{\pr x_{2n+1}}$ on $D$. From now on, we work on $D$ and we work with the local coordinates $x=(x_1,\ldots,x_{2n+1})$. Since $\mathcal{T}$ is a CR vector field, we can take $\varphi:D\times D\to\C$ satisfying 
\eqref{Eq:PhaseFuncMainThm}, \eqref{eq:PiFIO} and 
\begin{equation}\label{e-gue241210yyd}
\varphi(x,y)=x_{2n+1}-y_{2n+1}+\hat\varphi(x',y'),\ \ \hat\varphi(x',y')\in\cC^\infty(D\times D).
\end{equation}
We take $b^\chi_j(x,y,t)$ in \eqref{Eq:LeadingTermMainThm}, $j=0,1,\ldots$, so that \eqref{e-gue241128yydz} hold. 
Let $\reallywidetilde{\chi_k(A)}(x,y')$ be as in \eqref{eqn:ThmI3}. We have 
\begin{equation}\label{e-gue241210ycdy}
        \frac{1}{k}\reallywidetilde{\chi_k(A)(-i\mathcal{T})}(x,x')= \sum^{N}_{j=0}k^{n+1-j}m_0b^\chi_j(x,x',m_0)+O(k^{n-N}),
    \end{equation}
    for every $N\in\mathbb N$, for every $x\in D$. 
Let $\widehat{\chi}(t)=t\chi(t).$ We have $\frac{1}{k}\reallywidetilde{\chi_k(A)(-i\mathcal{T})}=\reallywidetilde{\widehat{\chi_k}(A)}$. From this observation and \eqref{e-gue241210ycdy}, we can repeat the proof of \eqref{eqn:ThmIII5} and deduce that 
\begin{equation}\label{e-gue241210ycdk}
a_{j,s}(x,x')=0, \ \mbox{for all $s=1,2,\ldots$, for all $j=0,1,\ldots$}, 
\end{equation}
where $a_{j,s}$, $j, s=0,1,\ldots$, are as in \eqref{e-gue241128yydz}. Thus, 
\begin{equation}\label{e-gue241210ycdl}
b^\chi_j(x,x',t)=a_{j,0}(x,x')\chi(t)t^{n-j},\ \ j=0,1,\ldots.
\end{equation}
Note that $a_{j,0}$ is independent of $\chi$, $j=0,1,\ldots$.
We have that $x\mapsto\chi_k(A)(x,x)$ is $\mathcal{T}$-invariant. From this observation and \eqref{e-gue241210ycdl}, it is not difficult to see that $b_j(x,x',t)$ is independent of $x_{2n+1}$, for all $j=0,1,\ldots$, and hence 
\begin{equation}\label{e-gue241210ycdm}
b^\chi_j(x,x',t)=b^\chi_j(x',x',t)=a_{j,0}(x',x')\chi(t)t^{n-j},\ \ j=0,1,\ldots.
\end{equation} 

From $\ddbar_{b,x}\varphi(x,y)$, $\ddbar_{b,x}\ol\varphi(y,x)$ vanish to infinite order at $x=y$, it is straightforward to check that  

\begin{equation}\label{e-gue241210ycdn}
\mbox{$\ddbar_{b,x}b_j(x,y',t)$, $\ddbar_{b,x}\ol b_j(y,x',t)$ vanish to infinite order at $x=y$}.
\end{equation}
From \eqref{e-gue241210ycdm} and \eqref{e-gue241210ycdn}, we can repeat the proof of~\cite{Hsiao_Galasso_JGA_2023}*{Lemma 3.1} with minor changes and deduce that there are $\alpha_j(x',y')\in\cC^\infty(D\times D)$, $j=0,1,\ldots$, such that 
\begin{equation}\label{e-gue241210ycdq}
\mbox{$b^\chi_j(x,y',t)-\alpha_j(x',y')\chi(t)t^{n-j}$ vanishes to infinite order at $x=y$, $j=0,1,\ldots$}.
\end{equation}
From \eqref{e-gue241210ycdq}, we can change $b^\chi_j(x,y',t)$ to $\alpha_j(x',y')\chi(t)t^{n-j}$, for all $j=0,1,\ldots$. The theorem follows. 
\end{proof}

\bibliographystyle{plain}

\end{document}